\newtheorem{thm}{Theorem}[section]
\newtheorem{cor}[thm]{Corollary}
\newtheorem{lem}[thm]{Lemma}
\newtheorem{prob}[thm]{Problem}
\newtheorem{prop}[thm]{Proposition}
\theoremstyle{definition}
\newtheorem{defn}[thm]{Definition}
\newtheorem{rem}[thm]{Remark}
\newtheorem{exmp}[thm]{Example}
\numberwithin{equation}{section}
\begin{document}

\title{A note on Teissier problem for nef classes}

\author{Yashan Zhang
}

\address{School of Mathematics, Hunan University, Changsha 410082, China
}
\email{yashanzh@hnu.edu.cn
}

\begin{abstract}
Teissier problem aims to characterize the equality case of Khovanskii-Teissier type inequality for $(1,1)$-classes on a compact K\"ahler manifold. When each of the involved $(1,1)$-classes is assumed to be nef and big, this problem has been solved by the previous works of Boucksom-Favre-Jonsson \cite{BFJ}, Fu-Xiao \cite{FX} and Li \cite{Li17}. In this note, we shall settle the case that the involved $(1,1)$-classes are just assumed to be nef. We also extend the results to some settings where some of the $(1,1)$-classes are not necessarily nef. By constructing examples, it is shown that our results are optimal. 
\end{abstract}

\maketitle

\section{Introduction}
\subsection{Khovanskii-Teissier inequalities and Teissier problem}
Around the year 1979, Khovanskii and Teissier independently discovered deep inequalities in algebraic geometry, which are profound analogs of Alexandrov-Fenchel inequalities in convex geometry. There are many remarkable further developments on Khovanskii-Teissier type inequalities (see e.g. \cite{BFJ,Co,De,DN,FX,Gr,La,LX,Li13,Li16,Li17,RT,Te,X} and references therein), among which we may recall the following one over an $n$-dimensional compact K\"ahler manifold $(X,\omega_X)$ as an example (see \cite{De,DN,Gr}), which will be closely related to our study here. In this note, when we write a class as $[\alpha]$, then $\alpha$ always is a smooth representative of $[\alpha]$. Arbitrarily given $[\alpha_1],...,[\alpha_{n}]\in H^{1,1}(X,\mathbb R)$ and assume that the first $n-1$ entries, $[\alpha_1],...,[\alpha_{n-1}]$, are nef, then the following Khovanskii-Teissier inequality holds:
\begin{align}\label{kt-nef}
\left(\int_X\alpha_1\wedge...\wedge\alpha_{n-2}\wedge\alpha_{n-1}\wedge\alpha_n\right)^2\ge\int_X\alpha_1\wedge...\wedge\alpha_{n-2}\wedge\alpha_{n-1}^2\cdot\int_X\alpha_1\wedge...\wedge\alpha_{n-2}\wedge\alpha_n^2.
\end{align}

Given the above inequality, it is natural to consider
\begin{prob}[Teissier \cite{Te,Te88}]\label{teissier-prob}
Characterize the equality case in \eqref{kt-nef}.
\end{prob}

In fact, a \emph{classical result} states that if further assume $[\alpha_1],...,[\alpha_{n-1}]$ are \emph{K\"ahler} classes, then the equality in \eqref{kt-nef} holds if and only if $[\alpha_{n-1}]$ and $[\alpha_n]$ are proportional in $H^{1,1}(X,\mathbb R)$ (see e.g. \cite{FX,Li16}). Moreover, if $[\alpha],[\beta]\in H^{1,1}(X,\mathbb R)$ are \emph{nef and big}, and for each $k=1,...,n-1$, the equality in \eqref{kt-nef} holds for $[\alpha_1]=...=[\alpha_k]=[\alpha]$ and $[\alpha_{k+1}]=...=[\alpha_n]=[\beta]$, then $[\alpha]$ and $[\beta]$ are proportional, thanks to the works of Boucksom-Favre-Jonsson \cite{BFJ} when $[\alpha],[\beta]$ are rational and Fu-Xiao \cite{FX} when $[\alpha],[\beta]$ are transcendental. Li \cite{Li17} further proved that if all the $n$ entries $[\alpha_1],...,[\alpha_n]$ are \emph{nef and big}, then the equality in \eqref{kt-nef} holds if and only if $[\alpha_1\wedge...\wedge\alpha_{n-2}\wedge\alpha_{n-1}]$ and $[\alpha_1\wedge...\wedge\alpha_{n-2}\wedge\alpha_{n}]$ are proportional in $H^{n-1,n-1}(X,\mathbb R)$. \\

In this note, we consider the general case of Problem \ref{teissier-prob} that all the $[\alpha_j]$'s are just assumed to be nef, dropping the bigness assumption.

\subsection{Results on Teissier Problem \ref{teissier-prob}}
We now state our results on Problem \ref{teissier-prob}.
\begin{thm}\label{thm1}
Let $X$ be an $n$-dimensional compact K\"ahler manifold, and $[\alpha_1],...,[\alpha_{n-2}],[\alpha],[\beta]\in H^{1,1}(X,\mathbb R)$. Assume either of the followings is satisfied.
\begin{itemize}
\item[(A1)] $[\alpha_1],...,[\alpha_{n-2}],[\alpha]$ are all nef and $[\beta]$ satisfies $\int_X\alpha_1\wedge...\wedge\alpha_{n-2}\wedge\beta^2\ge0$.
\item[(A2)] $[\alpha_1],...,[\alpha_{n-2}],[\alpha]$ are all nef and $\int_X\alpha_1\wedge...\wedge\alpha_{n-2}\wedge\alpha^2>0$.
\end{itemize}
Then
\begin{align}\label{kt-eq-1}
\left(\int_X\alpha_1\wedge...\wedge\alpha_{n-2}\wedge\alpha\wedge\beta\right)^2=\int_X\alpha_1\wedge...\wedge\alpha_{n-2}\wedge\alpha^2\cdot\int_X\alpha_1\wedge...\wedge\alpha_{n-2}\wedge\beta^2
\end{align}
if and only if $[\alpha_1\wedge...\wedge\alpha_{n-2}\wedge\alpha]$ and $[\alpha_1\wedge...\wedge\alpha_{n-2}\wedge\beta]$ are proportional.
\end{thm}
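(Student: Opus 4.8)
The plan is to translate \eqref{kt-eq-1} into a degeneracy statement for the symmetric bilinear form
\[
q(u,v):=\int_X\alpha_1\wedge\cdots\wedge\alpha_{n-2}\wedge u\wedge v
\]
on $H^{1,1}(X,\mathbb R)$, and to extract the signature of $q$ from the Khovanskii--Teissier inequality \eqref{kt-nef} itself, with no recourse to bigness. Write $\Theta:=\alpha_1\wedge\cdots\wedge\alpha_{n-2}$ and $a:=q(\alpha)$, $b:=q(\alpha,\beta)$, $c:=q(\beta)$, so that \eqref{kt-eq-1} reads $b^2=ac$. The implication ``proportional $\Rightarrow$ \eqref{kt-eq-1}'' needs no hypothesis: if $s[\Theta\wedge\alpha]+t[\Theta\wedge\beta]=0$ with $(s,t)\ne(0,0)$, then pairing with $[\alpha]$ and with $[\beta]$ shows that the Gram matrix $\left(\begin{smallmatrix}a&b\\ b&c\end{smallmatrix}\right)$ is singular, i.e.\ $b^2=ac$.

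For the converse I would first discard the cases $[\Theta\wedge\alpha]=0$ or $[\Theta\wedge\beta]=0$, in which the two classes are trivially proportional. In the remaining case $[\alpha]\notin\ker q$, so $q\not\equiv0$; since $\Theta$ is a product of nef classes, $q$ is $\ge0$ on the open K\"ahler cone, hence (a quadratic form cannot vanish on an open set without vanishing identically) there is a K\"ahler class $\omega$ with $q(\omega)>0$. Feeding the nef classes $[\alpha_1],\dots,[\alpha_{n-2}],[\omega]$ and an arbitrary $[\delta]$ into \eqref{kt-nef} yields $q(\omega,\delta)^2\ge q(\omega)q(\delta)$; hence $q$ is negative semidefinite on the $q$-orthogonal complement $[\omega]^{\perp_q}$, so $q$ has exactly one positive eigenvalue and $\ker q\subseteq[\omega]^{\perp_q}$. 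Passing to $\bar H:=H^{1,1}(X,\mathbb R)/\ker q$ with the induced nondegenerate form $\bar q$, of signature $(1,m)$, the images $\bar\alpha,\bar\beta$ are nonzero and $\bar q$ restricted to $\mathrm{span}\{\bar\alpha,\bar\beta\}$ has Gram matrix $\left(\begin{smallmatrix}a&b\\ b&c\end{smallmatrix}\right)$ again.

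This is where (A1)/(A2) enter. One always has $a=\int_X\Theta\wedge\alpha^2\ge0$, being an integral of a product of nef classes; under (A1) also $c\ge0$ by assumption, while under (A2) the strict inequality $a>0$ together with $ac=b^2$ forces $c\ge0$. Combined with $ac-b^2=0$, the Gram matrix is positive semidefinite of rank at most $1$. It then remains to observe, by elementary linear algebra, that a $2$-dimensional subspace of a space carrying a nondegenerate form of signature $(1,m)$ cannot support a positive semidefinite form of rank $\le1$: rank $0$ would make it totally isotropic, excluded because such a space has no $2$-dimensional totally isotropic subspace; and a rank-$1$ positive semidefinite restriction produces a nonzero isotropic vector lying in $v^{\perp_{\bar q}}$ for some $v$ with $\bar q(v)>0$, whereas $v^{\perp_{\bar q}}$ is negative definite. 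Therefore $\mathrm{span}\{\bar\alpha,\bar\beta\}$ is a line, i.e.\ $s[\alpha]+t[\beta]\in\ker q$ for some $(s,t)\ne(0,0)$, which says exactly that $s[\Theta\wedge\alpha]+t[\Theta\wedge\beta]=0$.

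The main obstacle is precisely the step that lets us drop bigness: showing that $q$ has at most one positive square using only nef-ness. I would emphasize that this Hodge-index-type fact is not imported from the mixed Hodge--Riemann relations or from the nef-and-big analysis of \cite{BFJ,FX,Li17}, but squeezed out of a single instance of \eqref{kt-nef}. The second delicate point is the fully degenerate branch $a=b=c=0$, where the Gram matrix has rank $0$: the argument closes only because $[\Theta\wedge\alpha]=0$ has already been excluded (so $\bar\alpha\ne0$) and because signature $(1,m)$ admits no $2$-dimensional totally isotropic subspace. The hypothesis $c\ge0$ (respectively $a>0$) is exactly what forbids the remaining possibility that $\left(\begin{smallmatrix}a&b\\ b&c\end{smallmatrix}\right)$ is negative semidefinite of rank $1$, which would defeat the last step; this is presumably the mechanism behind the optimality examples announced in the abstract.
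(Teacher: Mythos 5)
Your proof is correct, and it reaches the paper's conclusion by a genuinely different organization of the argument. The paper routes everything through the machinery of Section \ref{sect-hi}: it places $([\Omega],[\omega_X])$ in $\overline{\mathscr{H}}$ (limits of pairs satisfying the mixed Hodge--Riemann relations of Dinh--Nguy\^en), and the decisive inputs are the Lefschetz decomposition for boundary pairs (Lemma \ref{lem-ld}), semi-negativity of $Q$ on the primitive space (Lemma \ref{lem-hi}) and the characterization of its null vectors (Lemma \ref{lem-0eigenvector}); the equality case is then split into Theorem \ref{thm4} (when $\int_X\Omega\wedge\alpha^2>0$) and a separate degenerate branch for (A1) with $\int_X\Omega\wedge\alpha^2=0$. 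You instead analyze the full intersection form $q$ on $H^{1,1}(X,\mathbb R)$ at once: you extract the statement that $q$ has exactly one positive eigenvalue from a single instance of the inequality \eqref{kt-nef} (K\"ahler class in the $(n-1)$st slot, arbitrary last slot), identify the radical $\ker q$ with $\{[u]:[\Theta\wedge u]=0\}$ by the same Poincar\'e-duality step the paper uses inside Lemma \ref{lem-0eigenvector}, and close with the elementary fact that a nondegenerate form of signature $(1,m)$ admits no $2$-plane on which the restriction is positive semidefinite of rank at most $1$. This treats (A1), (A2) and the fully degenerate branch $a=b=c=0$ uniformly, and all the individual steps check out (the sign $a\ge0$ from nefness, the deduction $c\ge0$ under (A2) from $a>0$ and $b^2=ac$, the existence of a K\"ahler class with $q(\omega)>0$ from $q\not\equiv0$ together with nonnegativity on the open K\"ahler cone, and the two exclusion cases on the quotient). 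What your route buys is economy: you only need \eqref{kt-nef} as stated in the introduction, not the finer structure of $\overline{\mathscr{H}}$ --- though one should note that \eqref{kt-nef} with an arbitrary last entry is itself a limit of the Dinh--Nguy\^en relations, so the deep input is ultimately the same. What the paper's route buys is a framework whose lemmas also deliver Theorem \ref{thm1'} and the extensions of Section \ref{ext} without further work.
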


Theorem \ref{thm1} extends the above-mentioned results \cite{BFJ,FX,Li17} to the general nef classes, dropping the bigness assumption. 

\begin{exmp}
\begin{itemize}
\item[(1)] The (A1) is satisfied if $[\beta]$ is also nef. 
\item[(2)] The (A2) is satisfied if $[\alpha_1],...,[\alpha_{n-2}],[\alpha]$ are all nef and big, or if $[\alpha_1],...,[\alpha_{n-2}]$ are nef classes with $[\alpha_1\wedge...\alpha_{n-2}]\neq0$ and $[\alpha]$ is K\"ahler.
\end{itemize}
\end{exmp}

The following special case of Theorem \ref{thm1}(A1) may be of particular interest and applications.
\begin{thm}\label{thm1-nef}
Let $X$ be an $n$-dimensional compact K\"ahler manifold, and $[\alpha_1],...,[\alpha_{n-2}],[\alpha]$,\\$[\beta]\in H^{1,1}(X,\mathbb R)$ be nef classes. Then
\begin{align}\label{kt-eq-1+}
\left(\int_X\alpha_1\wedge...\wedge\alpha_{n-2}\wedge\alpha\wedge\beta\right)^2=\int_X\alpha_1\wedge...\wedge\alpha_{n-2}\wedge\alpha^2\cdot\int_X\alpha_1\wedge...\wedge\alpha_{n-2}\wedge\beta^2
\end{align}
if and only if $[\alpha_1\wedge...\wedge\alpha_{n-2}\wedge\alpha]$ and $[\alpha_1\wedge...\wedge\alpha_{n-2}\wedge\beta]$ are proportional.
\end{thm}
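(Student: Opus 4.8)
The plan is to deduce Theorem \ref{thm1-nef} as a direct corollary of Theorem \ref{thm1}, specifically from case (A1). First I would observe that when all of $[\alpha_1],\dots,[\alpha_{n-2}],[\alpha],[\beta]$ are nef, the hypothesis of (A1) is automatically met: the nef classes $[\alpha_1],\dots,[\alpha_{n-2}],[\alpha]$ are exactly the required nef entries, and for the final entry $[\beta]$ we need only check the single inequality $\int_X\alpha_1\wedge\cdots\wedge\alpha_{n-2}\wedge\beta^2\ge 0$. But this is precisely a Khovanskii--Teissier type positivity statement for a wedge of nef classes — indeed $[\alpha_1],\dots,[\alpha_{n-2}],[\beta],[\beta]$ are all nef, so the intersection number $\int_X\alpha_1\wedge\cdots\wedge\alpha_{n-2}\wedge\beta\wedge\beta$ is nonnegative by the standard nonnegativity of intersection numbers of nef classes on a compact Kähler manifold (this can be seen by approximating each nef class by Kähler classes $\alpha_j+\varepsilon\omega_X$ and passing to the limit, since a wedge of Kähler forms is a positive volume form; alternatively it follows from \eqref{kt-nef} applied appropriately, or from the cited literature on Khovanskii--Teissier inequalities). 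Hence (A1) holds.

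With (A1) verified, Theorem \ref{thm1} gives immediately that \eqref{kt-eq-1} — which is literally the same identity as \eqref{kt-eq-1+} — holds if and only if $[\alpha_1\wedge\cdots\wedge\alpha_{n-2}\wedge\alpha]$ and $[\alpha_1\wedge\cdots\wedge\alpha_{n-2}\wedge\beta]$ are proportional in $H^{n-1,n-1}(X,\mathbb R)$, which is exactly the conclusion of Theorem \ref{thm1-nef}. So the proof is essentially a two-line reduction: check the positivity hypothesis on $[\beta]$, then invoke the main theorem.

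The only genuine content is the positivity $\int_X\alpha_1\wedge\cdots\wedge\alpha_{n-2}\wedge\beta^2\ge 0$, and this is not really an obstacle but rather a well-known fact; I expect the author to either cite it or dispatch it in one sentence via the Kähler approximation argument. One should also note for completeness that the statement of Theorem \ref{thm1-nef} is symmetric in a sense the proof does not need to exploit: although (A1) is asymmetric between $[\alpha]$ and $[\beta]$, in the all-nef situation both play the role of a nef class with nonnegative self-intersection against $[\alpha_1\wedge\cdots\wedge\alpha_{n-2}]$, so no case distinction is required. Thus the entire argument is: verify the hypothesis of Theorem \ref{thm1}(A1), then apply Theorem \ref{thm1}. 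The hardest part, such as it is, was already done in proving Theorem \ref{thm1}; here there is nothing more to do.
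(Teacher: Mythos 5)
Your proposal is correct and is exactly the paper's route: the paper presents Theorem \ref{thm1-nef} as the special case of Theorem \ref{thm1}(A1), noting (in the example following Theorem \ref{thm1}) that (A1) is automatically satisfied when $[\beta]$ is nef, precisely because $\int_X\alpha_1\wedge\cdots\wedge\alpha_{n-2}\wedge\beta^2\ge 0$ for nef classes. Nothing further is needed.
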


As an immediate application of Theorem \ref{thm1}(A1) (or \ref{thm1-nef}), we can slightly extend Fu-Xiao's result \cite[Theorem 2.1, (1) and (6)]{FX} on nef and big classes to the case of nef classes.
\begin{cor}\label{thm-cor}
Let $X$ be an $n$-dimensional compact K\"ahler manifold, and $[\alpha],[\beta]\in H^{1,1}(X,\mathbb R)$ are two nef classes with $[\alpha^k\wedge\beta^{n-k-1}]\neq0$ for each $k=0,1,...,n-1$. Write $s_k:=\int_X\alpha^k\wedge\beta^{n-k}$, $k=0,1,...,n$. If 
\begin{align}\label{eq-fx}
s_k^2=s_{k-1}\cdot s_{k+1}\,\,\,for\,\,\,every\,\,\,k=1,...,n-1,
\end{align}
then $[\alpha^{n-1}]$ and $[\beta^{n-1}]$ are proportional.
\end{cor}
Actually, by Theorem \ref{thm1}(A1) (or \ref{thm1-nef}), the equalities in \eqref{eq-fx} imply that $[\alpha^k\wedge\beta^{n-k-1}]$'s are proportional, $k=0,1,...,n-1$. In particular, $[\alpha^{n-1}]$ and $[\beta^{n-1}]$ are proportional, proving the desired result. This gives an alternative treatment for \cite[Theorem 2.1, (1) and (6)]{FX} (also see \cite[Corollary 3.11]{Li17}).\\

The most general setting in the inequality \eqref{kt-nef} and Problem \ref{teissier-prob} assumes that $[\alpha_1]$,$...$,\\$[\alpha_{n-2}]$,$[\alpha]$ are all nef and $[\beta]$ is arbitrary, while our Theorem \ref{thm1} characterizes its equality under additional assumptions (i.e. $\int_X\alpha_1\wedge...\wedge\alpha_{n-2}\wedge\beta^2\ge0$ in (A1) and $\int_X\alpha_1\wedge...\wedge\alpha_{n-2}\wedge\alpha^2>0$ in (A2)). Then it is natural to wonder that are these additional assumptions in Theorem \ref{thm1} necessary? Namely, is Theorem \ref{thm1} optimal? We affirmatively answer this by Example \ref{exmp-optimal} in the next section. Combining Theorem \ref{thm1} and Example \ref{exmp-optimal} settles Problem \ref{teissier-prob} completely.

On the other hand, comparing with the classical result stated below Problem  \ref{teissier-prob} and results in \cite{BFJ,FX}, it seems natural to ask: given the equality assumed in Theorem \ref{thm1} above, can we make the stronger conclusion that $[\alpha]$ and $[\beta]$ are proportional? Of course this question is interesting only when $[\alpha_1\wedge...\wedge\alpha_{n-2}]\neq0$. The Example \ref{exmp-prod} in Section \ref{sect-exmp} shows that this is not always true. Therefore, we are naturally led to consider: given nef classes $[\alpha_1],...,[\alpha_{n-2}]$, when can we conclude from the equality \eqref{kt-eq-1} that $[\alpha]$ and $[\beta]$ are proportional? This turns out to be intimately related to \emph{Hodge index theorem}, whose definition is recalled as follows.

\begin{defn}(Hodge index theorem \cite[Section 4]{DN}; also \cite[Definition 1.5]{Z})\label{defn_hi}
For any $[\Omega]\in H^{n-2,n-2}(X,\mathbb R):=H^{n-2,n-2}(X,\mathbb C)\cap H^{2n-4}(X,\mathbb R)$ and $[\eta]\in H^{1,1}(X,\mathbb R)$, we define the \emph{primitive space with respect to $([\Omega],[\eta])$} by
$$P=P_{([\Omega],[\eta])}:=\left\{[\gamma]\in H^{1,1}(X,\mathbb C)|[\Omega]\wedge[\eta]\wedge[\gamma]=0\right\}.$$
Then we say \emph{$([\Omega],[\eta])$ satisfies the Hodge index theorem} if the quadratic form
$$Q_{([\Omega],[\eta])}([\beta],[\gamma]):=\int_X\Omega\wedge\beta\wedge\overline\gamma$$
is negative definite on $P_{([\Omega],[\eta])}$. 
\end{defn}
Several known Hodge index theorems will be listed in Example \ref{example}. Here is our next theorem.
\begin{thm}\label{thm1'}
Let $X$ be an $n$-dimensional compact K\"ahler manifold. The followings are equivalent.
\begin{itemize}
\item[(1)] for any $[\alpha_1],...,[\alpha_{n-2}],[\alpha],[\beta]$ satisfying either of (A1), (A2) in Theorem \ref{thm1},
\begin{align}\label{kt-eq-1'}
\left(\int_X\alpha_1\wedge...\wedge\alpha_{n-2}\wedge\alpha\wedge\beta\right)^2=\int_X\alpha_1\wedge...\wedge\alpha_{n-2}\wedge\alpha^2\cdot\int_X\alpha_1\wedge...\wedge\alpha_{n-2}\wedge\beta^2
\end{align}
if and only if $[\alpha]$ and $[\beta]$ are proportional.
\item[(2)] $([\alpha_1\wedge...\wedge\alpha_{n-2}],[\omega_X])$ satisfies Hodge index theorem.
\item[(3)] there exists $[\theta]\in H^{1,1}(X,\mathbb R)$ such that $([\alpha_1\wedge...\wedge\alpha_{n-2}],[\theta])$ satisfies Hodge index theorem.
\end{itemize}
\end{thm}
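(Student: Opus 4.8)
The plan is to prove the cyclic implications $(2)\Rightarrow(1)\Rightarrow(3)\Rightarrow(2)$, writing $[\Omega]:=[\alpha_1\wedge\cdots\wedge\alpha_{n-2}]$ throughout. The implication $(3)\Rightarrow(2)$ should be essentially formal: if $([\Omega],[\theta])$ satisfies the Hodge index theorem for \emph{some} $[\theta]$, then $([\Omega],[\eta])$ satisfies it for every $[\eta]$ with $[\Omega\wedge\eta]\neq 0$; this is a standard polarization/continuity argument (the signature of the quadratic form $[\gamma]\mapsto\int_X\Omega\wedge\gamma^2$ on $H^{1,1}$, once one knows it has exactly one positive eigenvalue on a suitable complement, is a topological invariant independent of the choice of polarizing class), and in particular applies to $[\eta]=[\omega_X]$. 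I would spell this out using the fact that $[\omega_X]$ lies in the same connected component of $\{[\eta]:[\Omega\wedge\eta]\neq0\}$ as... here lies a subtlety, so more robustly I would argue directly: since $([\Omega],[\theta])$ has Hodge index, the bilinear form $Q_{[\Omega]}([\beta],[\gamma])=\int_X\Omega\wedge\beta\wedge\gamma$ on $H^{1,1}(X,\mathbb R)$ has signature $(1,d-1)$ on $H^{1,1}/\ker$, and then for any $[\eta]$ with $\int_X\Omega\wedge\eta^2\neq 0$ necessarily $\int_X\Omega\wedge\eta^2>0$ (a negative value would force, with Cauchy–Schwarz for the Lorentzian form, a second non-negative direction), whence the orthogonal complement $P_{([\Omega],[\eta])}$ is negative definite.

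For $(2)\Rightarrow(1)$: assume $([\Omega],[\omega_X])$ satisfies Hodge index. The ``if'' direction of \eqref{kt-eq-1'} is trivial (proportionality of $[\alpha],[\beta]$ gives equality). For ``only if'', suppose the equality \eqref{kt-eq-1'} holds under (A1) or (A2). By Theorem \ref{thm1}, $[\Omega\wedge\alpha]$ and $[\Omega\wedge\beta]$ are proportional, say $[\Omega\wedge\beta]=c\,[\Omega\wedge\alpha]$ (handling first the degenerate case $[\Omega\wedge\alpha]=0$, where under (A2) this is excluded and under (A1) one gets $[\Omega\wedge\beta]=0$ too, and then $[\Omega\wedge(\alpha-t\beta)]=0$ for suitable $t$ reduces matters). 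Then $[\beta-c\alpha]\in P_{([\Omega],[\omega_X])}$... except the primitive space in Definition \ref{defn_hi} uses the polarizing class $[\omega_X]$, not $[\alpha]$; so I would instead observe $[\Omega\wedge(\beta-c\alpha)]=0$ directly implies $\int_X\Omega\wedge(\beta-c\alpha)^2=0$, and then invoke that a class $[\gamma]$ with $[\Omega\wedge\gamma]=0$ automatically lies in $P_{([\Omega],[\eta])}$ for every $[\eta]$; choosing $[\eta]=[\omega_X]$ and applying negative-definiteness forces $[\beta-c\alpha]=0$ in $H^{1,1}$, i.e. $[\alpha],[\beta]$ proportional.

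For $(1)\Rightarrow(3)$, the natural route is the contrapositive: if no $[\theta]$ makes $([\Omega],[\theta])$ satisfy Hodge index — equivalently, by the signature discussion above, if $Q_{[\Omega]}$ fails to have Lorentzian signature $(1,*)$ on $H^{1,1}/\ker$ — then I must produce $[\alpha_1],\dots,[\alpha_{n-2}],[\alpha],[\beta]$ with the given $[\Omega]$, satisfying (A1) or (A2), for which \eqref{kt-eq-1'} holds but $[\alpha],[\beta]$ are not proportional. Since $[\omega_X]$ is Kähler we have $\int_X\Omega\wedge\omega_X^2>0$, so $Q_{[\Omega]}$ always has at least one positive direction; the failure of Hodge index therefore means there exist two $Q_{[\Omega]}$-orthogonal classes on which $Q_{[\Omega]}$ is non-negative, i.e. there is a class $[\beta_0]\notin\ker$ with $\int_X\Omega\wedge\beta_0^2\ge 0$ that is $Q_{[\Omega]}$-orthogonal to $[\omega_X]$ (or one reduces to this by a rotation within the relevant $2$-plane). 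Take $[\alpha]=[\omega_X]$, take $\alpha_1,\dots,\alpha_{n-2}$ to be the chosen representatives of $[\Omega]$, and $[\beta]=[\beta_0]$: then (A1) holds (all of $[\alpha_1],\dots,[\alpha_{n-2}],[\alpha]$ nef, and $\int_X\Omega\wedge\beta^2\ge 0$), the cross term $\int_X\Omega\wedge\alpha\wedge\beta=Q_{[\Omega]}([\omega_X],[\beta_0])=0$ gives equality $0=(\text{positive})\cdot 0$... which fails unless $\int_X\Omega\wedge\beta_0^2=0$; so one must arrange $\int_X\Omega\wedge\beta_0^2=0$, which is exactly possible when the non-negative $2$-plane is degenerate, and when it is not I would instead not fix $[\alpha]=[\omega_X]$ but pick $[\alpha],[\beta]$ spanning the non-negative-definite $2$-plane scaled so that the Cauchy–Schwarz equality $(\int\Omega\wedge\alpha\wedge\beta)^2=\int\Omega\wedge\alpha^2\cdot\int\Omega\wedge\beta^2$ holds there by a $2\times 2$ linear-algebra normalization, checking that $[\alpha]$ can be taken nef (perturb toward $[\omega_X]$ inside the cone if necessary, using that the relevant quadratic identity is closed) and that (A1) or (A2) is met. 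I expect this last step — producing the counterexample with all nefness constraints intact and hitting the Khovanskii–Teissier equality exactly, rather than just an inequality — to be the main obstacle, and the model for it is precisely Example \ref{exmp-optimal}, which already realizes this mechanism on surfaces fibered over a curve; the general argument should be a coordinate-free repackaging of that construction in terms of the signature of $Q_{[\Omega]}$.
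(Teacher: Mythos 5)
Your implications $(2)\Rightarrow(1)$ and $(3)\Rightarrow(2)$ are essentially workable and close in spirit to the paper, which instead proves $(3)\Rightarrow(1)$ directly: Hodge index for $([\Omega],[\theta])$ makes $[\Omega]\wedge\cdot\,$ an isomorphism on $H^{1,1}$ (Remark \ref{rem_cor}(1)(a)), so the proportionality of $[\Omega\wedge\alpha]$ and $[\Omega\wedge\beta]$ furnished by Theorem \ref{thm1} upgrades at once to proportionality of $[\alpha]$ and $[\beta]$ --- the same injectivity you invoke to kill $[\beta-c\alpha]$. One caution in your $(3)\Rightarrow(2)$: the claim that \emph{any} $[\eta]$ with $\int_X\Omega\wedge\eta^2\neq0$ must satisfy $\int_X\Omega\wedge\eta^2>0$ is false for a form of Lorentzian signature $(1,d-1)$ (there are many negative directions, and for such $[\eta]$ the pair $([\Omega],[\eta])$ fails Hodge index even though $[\Omega\wedge\eta]\neq0$, so your opening claim that one polarizing class yields all of them is also false). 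What saves the step is that for the specific class $[\eta]=[\omega_X]$ one has $\int_X\Omega\wedge\omega_X^2>0$ directly, since $[\Omega]\wedge[\omega_X^2]$ is represented by a nonzero positive current (Example \ref{exmp-hi-bdy}(1)); with that, the orthogonal-complement argument for a signature-$(1,d-1)$ form does give $(3)\Rightarrow(2)$.

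The genuine gap is in $(1)\Rightarrow(3)$, exactly where you flag ``the main obstacle''. You correctly reduce to producing $[\beta_0]\in P_{([\Omega],[\omega_X])}\setminus\{0\}$ with $\int_X\Omega\wedge\beta_0^2=0$, but you cannot force this vanishing and fall back on a vague two-case analysis involving a ``non-negative-definite $2$-plane'' that does not close. The missing ingredient is Lemma \ref{lem-hi}: because the $[\alpha_j]$ are nef, $([\Omega],[\omega_X])$ lies in $\overline{\mathscr H}$, i.e.\ it is a limit of pairs satisfying Hodge index, so $Q_{([\Omega],[\omega_X])}$ is automatically \emph{semi}-negative on $P_{([\Omega],[\omega_X])}$; equivalently, this follows from the Khovanskii--Teissier inequality \eqref{kt-nef} applied with last two entries $[\omega_X],[\gamma]$, which gives $0\ge\int_X\Omega\wedge\omega_X^2\cdot\int_X\Omega\wedge\gamma^2$ for primitive $[\gamma]$. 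Hence any witness $[\gamma]\neq0$ to the failure of Hodge index for $([\Omega],[\omega_X])$ satisfies $\int_X\Omega\wedge\gamma^2=0$ exactly, your positive-definite $2$-plane case never occurs, and taking $[\alpha]=[\omega_X]$, $[\beta]=[\gamma]$ already violates (1): the equality \eqref{kt-eq-1'} reads $0=(\text{positive})\cdot 0$, condition (A1) holds since $\int_X\Omega\wedge\gamma^2=0\ge0$, and $[\gamma]$ is not proportional to $[\omega_X]$ because $\int_X\Omega\wedge\omega_X\wedge\gamma=0$ while $\int_X\Omega\wedge\omega_X^2>0$. (The paper goes one step further, using Lemma \ref{lem-0eigenvector} to deduce $[\Omega\wedge\gamma]=0$ and then testing (1) against the two K\"ahler classes $[\omega]$ and $[\omega+\gamma]$; that refinement is not needed here since (A1) only requires $\int_X\Omega\wedge\beta^2\ge0$.)
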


\begin{rem}
In the algebraic setting, a special (non-mixed) version of Theorem \ref{thm1}(A2) was contained in \cite{Luo}, whose idea can be modified to proved Theorem \ref{thm1}(A2) in the K\"ahler setting (we thank Jian Xiao for pointing this out to us); also see \cite{Li17} for another argument in the case of all $n$ $(1,1)$-classes being nef and big. As will be presented in Sections \ref{sect-hi} and \ref{sect-pf}, our method in this note will take a different and slightly more general and abstract way, which enables us to unifiedly handle both items (A1) and (A2) of Theorem \ref{thm1}, as well as their extensions (see Section 4) in which some of the $(1,1)$-classes are NOT necessarily nef. Moreover, Theorem \ref{thm1} is somehow \emph{optimal}, see Example \ref{exmp-optimal}.
\end{rem}

\subsection{Organization} The remaining part is organized as follows. In Section \ref{sect-exmp} we will exhibit several examples concerning the optimality of our results. In Section \ref{sect-hi} some general properties of limits of pairs satisfying Hodge index theorem will be presented. In Section \ref{sect-pf} we will prove results stated above. In Section \ref{ext}, we discuss certain extensions of our results to more general settings where some of the $(1,1)$-classes are NOT necessarily nef.

\section{Examples}\label{sect-exmp}
We first exhibit several examples, which illustrate the optimality of our results.
\begin{exmp}[Theorem \ref{thm1} is optimal]\label{exmp-optimal}
Assume $X$ is an $n$-dimensional compact K\"ahler manifold of $d:=dim H^{1,1}(X,\mathbb R)\ge3$, and there is a holomorphic surjection $f:X\to S$ to a closed smooth Riemann surface $S$. Fix K\"ahler metrics $\omega$ on $X$ and $\chi$ on $Y$, and set $[\alpha]:=[f^*\chi]$, which is a nef class on $X$. Note that $[\omega^{n-2}\wedge\alpha]\neq0$ and 
\begin{align}\label{optimal-1}
\int_X\omega^{n-2}\wedge\alpha^2=0.
\end{align} 
Denote $$P_1:=\left\{[\gamma]\in H^{1,1}(X,\mathbb R)|[\omega^{n-1}\wedge\gamma]=0\right\}\,\, and \,\,P_2:=\left\{[\gamma]\in H^{1,1}(X,\mathbb R)|[\omega^{n-2}\wedge\alpha\wedge\gamma]=0\right\}.$$
Both $P_1$ and $P_2$ are $(d-1)$-dimensional subspace in $H^{1,1}(X,\mathbb R)$ (as $[\omega^{n-2}\wedge\alpha]\neq0$). Because $d\ge3$, by linear algebra we may fix a non-zero $[\beta]\in P_1\cap P_2$. Then $[\beta]$ satisfies

\begin{align}\label{optimal-2}
[\omega^{n-2}\wedge\alpha\wedge\beta]=0\,\,\,and\,\,\,[\omega^{n-1}\wedge\beta]=0
\end{align} 
and 
\begin{align}\label{optimal-3}
\int_X\omega^{n-2}\wedge\beta^2<0.
\end{align} 
Note that given the second equality in \eqref{optimal-2} and the fact that $[\beta]\neq0$, the \eqref{optimal-3} follows from the classical Hodge index theorem (see e.g. \cite{V}). From \eqref{optimal-1} and \eqref{optimal-2}, we obviously have the equality
$$\left(\int_X\omega^{n-2}\wedge\alpha\wedge\beta\right)^2=\int_X\omega^{n-2}\wedge\alpha^2\cdot\int_X\omega^{n-2}\wedge\beta^2.$$
However, we claim that $[\omega^{n-2}\wedge\alpha]$ and $[\omega^{n-2}\wedge\beta]$ are NOT proportional. Indeed, if $[\omega^{n-2}\wedge\beta]=c\cdot[\omega^{n-2}\wedge\alpha]$ for some $c\in\mathbb R$, then we conclude that
$$\int_X\omega^{n-2}\wedge\beta^2=c\cdot\int_X\omega^{n-2}\wedge\alpha\wedge\beta=0,$$
contradicting to \eqref{optimal-3}. This example (see \eqref{optimal-1} and \eqref{optimal-3}) shows that Theorem \ref{thm1} is optimal.
\end{exmp}

\begin{rem}
Combining Theorem \ref{thm1} and Example \ref{exmp-optimal} settles Problem \ref{teissier-prob} completely.\\
\end{rem}

The next example shows that in general the conclusion in 
Theorems \ref{thm1} (and \ref{thm1-nef}) could not be improved to $[\alpha]$ and $[\beta]$ being proportional.

\begin{exmp}($[\alpha]$ and $[\beta]$ may not be proportional)\label{exmp-prod}
Let $f:X\to Y$ be holomorphic submersion over an $m$-dimensional compact K\"ahler manifold $Y$, $1\le m\le n-2$. For $j=1,...,m$, choose $\alpha_j=f^*\chi_j$, where $\chi_j$'s are K\"ahler metrics on $Y$, and $\alpha_{m+1},...,\alpha_{n-1}$ be K\"ahler metrics on $X$ and $\alpha_n=\alpha_{n-1}+f^*\chi$ with $\chi$ a K\"ahler metric on $Y$. Then $[\alpha_1\wedge...\wedge\alpha_{n-2}]\neq0$ and the equality \eqref{kt-eq-1} holds with $[\alpha]=[\alpha_{n-1}]$ and $[\beta]=[\alpha_{n}]$, but $[\alpha_{n-1}]$ and $[\alpha_n]$ are NOT proportional. Acturally, if $[\alpha_{n-1}]$ and $[\alpha_n]$ are proportional, we can easily deduce that $[f^*\chi]$ is a K\"ahler or zero class, both of which are absurd.

Combining Theorem \ref{thm1'}, one sees that $[f^*\chi_1\wedge...\wedge f^*\chi_{m}\wedge\alpha_{m+1}\wedge...\wedge\alpha_{n-2}]$ does not satisfy Hodge index theorem (compare \cite[Theorem 1.7]{Z}).
\end{exmp}

\section{limits of pairs satisfying Hodge index theorem}\label{sect-hi}

\subsection{A generalized $m$-positivity} We first introduce the positivity condition used in the discussions.
\begin{defn}\label{m-pos}\cite[Definition 1.1]{Z}
Let $\Phi$ be a (strictly) positive $(m,m)$-form on $X$ and $\eta$ a real $(1,1)$-form on $X$. We say a real $(1,1)$-form $\alpha$ on $X$ is \emph{$(n-m)$-positive with respect to $(\Phi,\eta)$} if 
$$\Phi\wedge\eta^{n-m-k}\wedge\alpha^k>0$$	
for any $1\le k\le n-m$. In particular, the case that $(\Phi,\eta)=(\omega_X^{m},\omega_X)$ gives the original $(n-m)$-positivity with respect to a fixed K\"ahler metric $\omega_X$, and in this case we say $\alpha$ is $(n-m)$-positive with respect to $\omega_X$.
\end{defn}

\begin{defn}[generalized $m$-positivity cone]\label{defn-cone}
Fix an integer $m\le n-2$. Assume $\omega_1,...,\omega_{m}$ are K\"ahler metrics on $X$. Let 
$\Gamma\subset H^{1,1}(X,\mathbb R)$ be the convex open cone consisting of $[\phi]\in H^{1,1}(X,\mathbb R)$ that has a smooth representative $\phi$ which is $(n-m)$-positive with respect to $(\omega_1\wedge...\wedge\omega_{m},\omega_X)$, and let $\overline\Gamma$ be the closure of $\Gamma$ in $H^{1,1}(X,\mathbb R)$. Note that $\overline\Gamma$ contains the nef cone of $X$, as $\Gamma$ contains the K\"ahler cone of $X$.
\end{defn}

\subsection{Hodge index theorems}
Let $\mathscr {H}$ be the set of pair $([\Omega],[\eta])\in H^{n-2,n-2}(X,\mathbb R)\times H^{1,1}(X,\mathbb R)$ satisfying the Hodge index theorem.

\begin{exmp}[Known elements in $\mathscr {H}$]\label{example}
Several elements in $\mathscr {H}$ have been pinpointed by classical and recent works.
\begin{itemize}
\item[(1)] If $\omega$ is a K\"ahler metric on $X$, then $([\omega^{n-2}],[\omega])\in\mathscr {H}$ by the classical Hodge index theorem; if $\omega_1,...,\omega_{n-1}$ are K\"ahler metrics, then $([\omega_1\wedge...\wedge\omega_{n-2},\omega_{n-1})\in \mathscr {H}$ by \cite{C,DN,Gr}; moreover, if $\omega$ is a K\"ahler metric and $\alpha_1,...,\alpha_{n-m-1}$ are closed real $(1,1)$-forms which are $(n-m)$-positive with respect to $\omega$, then $([\omega^m\wedge\alpha_1\wedge...\wedge\alpha_{n-m-2}],[\alpha_{n-m-1}])\in\mathscr {H}$, thanks to Xiao \cite{X}.

\item[(3)] Let $m\le n-2$, $\omega_1,...,\omega_{m}$ be K\"ahler metrics and $\Gamma$ be the cone defined in Definition \ref{defn-cone}. For any $\alpha_1,...,\alpha_{n-m-1}\in\Gamma$, our previous work \cite[Theorems 1.6, 1.7]{Z} proved that $([\omega_1\wedge...\wedge\omega_{m}\wedge\alpha_1\wedge...\wedge\alpha_{n-m-2}],[\alpha_{n-m-1}])\in\mathscr {H}$.

\item[(2)] Several abstract versions of Hodge index theorem have been discovered in \cite{DN13,RT}.
\end{itemize}
\end{exmp}

\begin{rem}\label{rem_cor}\cite[Remark 2.9]{Z}
We remark some consequences of the Hodge index theorem, which should be well-known (see e.g. \cite{DN,X}). 
\begin{itemize}
\item[(1)] For any $([\Omega],[\eta])\in\mathscr{H}$, we have the Hard Lefschetz and Lefschetz Decomposition Theorems (with respect to $([\Omega],[\eta])$) as follows:
\begin{itemize}
\item[(a)] The map $[\Omega]:H^{1,1}(X,\mathbb C)\to H^{n-1,n-1}(X,\mathbb C)$ is an isomorphism;
\item[(b)] The space $H^{1,1}(X,\mathbb C)$ has a $Q$-orthogonal direct sum decomposition
$$H^{1,1}(X,\mathbb C)=P_{([\Omega],[\eta])}\oplus\mathbb C[\eta].$$
\end{itemize}

\item[(2)] For any $([\Omega],[\eta])\in\mathscr{H}$, we have the Khovanskii-Teissier type inequalities as follows. 
\begin{itemize}
\item[(c)] For any closed real $(1,1)$-forms $\phi,\psi\in H^{1,1}(X,\mathbb R)$ with $\phi$ $2$-positive with respect to $(\Omega,\eta)$, then 
$$\left(\int_X\Omega\wedge\phi\wedge\psi\right)^2\ge\left(\int_X\Omega\wedge\phi^2\right)\left(\int_X\Omega\wedge\psi^2\right)$$
with equality if and only if $[\phi]$ and $[\psi]$ are proportional.
\end{itemize}
\end{itemize}
\end{rem}

\subsection{Boundary elements in $\overline{\mathscr{H}}$}\label{subsect-bdy}
In this subsection \ref{subsect-bdy}, assume 
$$([\Omega],[\eta])=\lim([\Omega_i],[\eta_i])\,\,\, in \,\,\,H^{n-2,n-2}(X,\mathbb R)\times H^{1,1}(X,\mathbb R),$$ 
where $([\Omega_i],[\eta_i])\in\mathscr{H}$; in other words, $([\Omega],[\eta])\in\overline{\mathscr{H}}$, the closure of $\mathscr{H}$ in $H^{n-2,n-2}(X,\mathbb R)\times H^{1,1}(X,\mathbb R)$. 

We shall investigate the properties of such $([\Omega],[\eta])$, which slightly extends \cite[Section 4]{DN} and will be used in our later discussions.

\begin{exmp}[Known elements on the boundary of $\overline{\mathscr{H}}$]\label{exmp-hi-bdy}
\noindent\par(1) (\cite[Section 4]{DN}) Let $[\alpha_1],...,[\alpha_{n-2}]$ be nef $(1,1)$-classes with $[\alpha_1\wedge...\wedge\alpha_{n-2}]\neq0$, and $\omega$ a K\"ahler metric. Note that both $[\alpha_1\wedge...\wedge\alpha_{n-2}]\wedge[\omega]$ and $[\alpha_1\wedge...\wedge\alpha_{n-2}]\wedge[\omega^2]$ can be represented by non-zero positive currents. Then 
$$([\alpha_1\wedge...\wedge\alpha_{n-2}],[\omega])\in\overline{\mathscr{H}}\,\,\,and \,\,\,[\alpha_1\wedge...\wedge\alpha_{n-2}]\wedge[\omega^2]\neq0.$$\\
\noindent(2)Fix an integer $m\le n-2$. Assume $\omega_1,...,\omega_{m}$ are K\"ahler metrics on $X$. Let 
$\Gamma\subset H^{1,1}(X,\mathbb R)$ be the cone defined in Definition \ref{defn-cone}. Suppose $[\alpha_1],...,[\alpha_{n-m-2}]\in\overline\Gamma$ and set $[\Omega]:=[\omega_1\wedge...\wedge\omega_{m}\wedge\alpha_{1}\wedge...\wedge\alpha_{n-m-2}]$. Assume $[\Omega]\neq0$. Then for any K\"ahler metric $\omega$, by Example \ref{example}(2) we know $([\Omega],[\omega])\in \overline{\mathscr{H}}$, as each $[\alpha_j]$ is a limit of elements in $\Gamma$. Moreover, for any $n-m-2$ closed real $(1,1)$-forms $\tilde\alpha_1,...,\tilde\alpha_{n-m-2}$ with $[\tilde\alpha_j]\in\Gamma$, by G{\aa}rding theory for hyperbolic polynomials (see e.g. \cite[Theorem 2.2(1)]{Z}), locally we have, for any $i,j$ and any K\"ahler metric $\tilde\omega$,
$$\omega_1\wedge...\wedge\omega_{m}\wedge\tilde\alpha_{1}\wedge...\wedge\tilde\alpha_{n-m-2}\wedge\sqrt{-1}dz^i\wedge d\bar z^i\wedge\sqrt{-1}dz^j\wedge d\bar z^j\ge0$$
and 
$$\omega_1\wedge...\wedge\omega_{m}\wedge\tilde\alpha_{1}\wedge...\wedge\tilde\alpha_{n-m-2}\wedge\tilde\omega\wedge\sqrt{-1}dz^j\wedge d\bar z^j>0$$
from which, by limiting $\tilde\alpha_j$'s, one sees that both $[\Omega]\wedge[\omega]$ and $[\Omega]\wedge[\omega^2]$ can be represented by non-zero positive currents. Therefore, $[\Omega]\wedge[\omega^2]\neq0$.\\

\noindent(3) Assume $[\alpha_1],...,[\alpha_{n-2}],[\eta]$ are nef and big, and set $[\Omega]=[\alpha_1\wedge...\wedge\alpha_{n-2}]$. Then obviously $([\Omega],[\eta])\in\overline{\mathscr{H}}$ and $[\Omega]\wedge[\omega^2]\neq0$.\\

\noindent(4) We may particularly mention that, given the abstract versions of Hodge index theorem (see \cite{DN13,RT}), we have some abstract elements on the boundary of $\overline{\mathscr{H}}$.

\end{exmp}

In general, $([\Omega],[\eta])$ may no longer satisfy the Hodge index  theorem; however, it still has some good properties. 

\begin{lem}[Lefschetz decomposition]\label{lem-ld}
Suppose $([\Omega],[\eta])\in\overline{\mathscr{H}}$ and $[\Omega]\wedge[\eta^2]\neq0$. Then any $[\beta]\in H^{1,1}(X,\mathbb C)$ can be uniquely decomposed as 
$$[\beta]=c\cdot[\eta]+[\gamma], \,\,\,with\,\,c\in\mathbb C\,\,and\,\,[\gamma]\in P_{([\Omega],[\eta])}.$$
\end{lem}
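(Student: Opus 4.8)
The plan is to obtain existence of the decomposition by a limiting argument from the pairs $([\Omega_i],[\eta_i])\in\mathscr{H}$, where the Lefschetz decomposition (Remark \ref{rem_cor}(1)(b)) holds, and then to establish uniqueness directly. First I would address uniqueness, which is the easier half: if $c_1[\eta]+[\gamma_1]=c_2[\eta]+[\gamma_2]$ with $[\gamma_j]\in P_{([\Omega],[\eta])}$, then $(c_1-c_2)[\eta]=[\gamma_2]-[\gamma_1]\in P_{([\Omega],[\eta])}$; wedging with $[\Omega]\wedge[\eta]$ and using the defining property of the primitive space gives $(c_1-c_2)[\Omega]\wedge[\eta^2]=0$, so $c_1=c_2$ by the hypothesis $[\Omega]\wedge[\eta^2]\neq0$, and hence $[\gamma_1]=[\gamma_2]$.

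For existence, the natural approach is dimension counting: it suffices to show that $P_{([\Omega],[\eta])}$ is a hyperplane in $H^{1,1}(X,\mathbb{C})$ that does not contain $[\eta]$, since then $H^{1,1}(X,\mathbb{C})=P_{([\Omega],[\eta])}\oplus\mathbb{C}[\eta]$ as vector spaces. That $[\eta]\notin P_{([\Omega],[\eta])}$ is again immediate from $[\Omega]\wedge[\eta^2]\neq0$. For the hyperplane claim, I would consider the linear functional $L\colon H^{1,1}(X,\mathbb{C})\to H^{n-1,n-1}(X,\mathbb{C})$, $[\gamma]\mapsto[\Omega]\wedge[\eta]\wedge[\gamma]$, whose kernel is exactly $P_{([\Omega],[\eta])}$; thus $\dim P_{([\Omega],[\eta])}=\dim H^{1,1}(X,\mathbb{C})-\operatorname{rank}(L)$, and since $L([\eta])=[\Omega]\wedge[\eta^2]\neq0$ we have $\operatorname{rank}(L)\ge1$. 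The point is to show $\operatorname{rank}(L)\le 1$, equivalently that $[\Omega]\wedge[\eta]\wedge[\gamma]$ is a scalar multiple of $[\Omega]\wedge[\eta^2]$ for every $[\gamma]$. This I would get from the approximating sequence: for each $i$, the Hard Lefschetz property (Remark \ref{rem_cor}(1)(a)) makes $[\Omega_i]\wedge[\eta_i]\colon H^{1,1}\to H^{n-1,n-1}$ an isomorphism, but more usefully the Lefschetz decomposition for $([\Omega_i],[\eta_i])$ writes $[\gamma]=c_i(\gamma)[\eta_i]+[\gamma_i']$ with $[\gamma_i']$ primitive, so $[\Omega_i]\wedge[\eta_i]\wedge[\gamma]=c_i(\gamma)[\Omega_i]\wedge[\eta_i^2]$; the scalars $c_i(\gamma)$ are given by $c_i(\gamma)=\big(\int_X\Omega_i\wedge\eta_i\wedge\gamma\wedge\omega_X^{?}\big)/\big(\int_X\Omega_i\wedge\eta_i^2\wedge\omega_X^{?}\big)$-type ratios and hence converge provided the relevant denominators stay bounded away from $0$, which is exactly what $[\Omega]\wedge[\eta^2]\neq0$ ensures in the limit. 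Passing to the limit yields $[\Omega]\wedge[\eta]\wedge[\gamma]=c(\gamma)[\Omega]\wedge[\eta^2]$ for a scalar $c(\gamma)$ depending linearly on $[\gamma]$, whence $[\gamma]-c(\gamma)[\eta]\in P_{([\Omega],[\eta])}$, giving the desired decomposition.

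The main obstacle I anticipate is controlling the scalars $c_i(\gamma)$ in the limit: a priori the primitive components $[\gamma_i']$ could blow up even though $[\gamma]$ is fixed, if the splitting $\mathbb{C}[\eta_i]\oplus P_{([\Omega_i],[\eta_i])}$ degenerates. The resolution is precisely the normalization by a fixed positive pairing — pairing the identity $[\Omega_i]\wedge[\eta_i]\wedge[\gamma]=c_i(\gamma)[\Omega_i]\wedge[\eta_i^2]$ against a fixed class such as $[\omega_X^{n-1}]$ and using that $\int_X\Omega_i\wedge\eta_i^2\wedge(\text{something})$ does not degenerate to $0$, which follows from $[\Omega]\wedge[\eta^2]\neq0$ together with the representability of $[\Omega]\wedge[\eta^2]$ by a nonzero positive current in the cases of interest (Example \ref{exmp-hi-bdy}). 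Once boundedness of the $c_i(\gamma)$ is in hand, the rest is continuity of the cup product and the (finite-dimensional) fact that a kernel of corank one splits off a complementary line. I would also remark that this lemma is the boundary analogue of Remark \ref{rem_cor}(1)(b), with the honest $Q$-orthogonality replaced by the weaker algebraic splitting, which is all that is needed later.
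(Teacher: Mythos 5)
Your proof is correct and in substance the same as the paper's: both derive the identity $[\Omega_i]\wedge[\eta_i]\wedge[\gamma]=c_i[\Omega_i]\wedge[\eta_i^2]$ from the decomposition for the approximating pairs in $\mathscr{H}$, use $[\Omega]\wedge[\eta^2]\neq0$ to keep the scalars bounded in the limit, and give the identical uniqueness argument. One remark: your map $L$ lands in $H^{n,n}(X,\mathbb C)$ (not $H^{n-1,n-1}(X,\mathbb C)$), which is one-dimensional, so $P_{([\Omega],[\eta])}$ is the kernel of a single linear functional that is nonzero on $[\eta]$ by hypothesis, and the existence half therefore follows immediately from $[\Omega]\wedge[\eta^2]\neq0$ alone, with no need for the limiting argument or for the positivity considerations of Example \ref{exmp-hi-bdy}.
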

\begin{proof}
By definition, $$([\Omega],[\eta])=\lim([\Omega_i],[\eta_i])\,\,\, in \,\,\,H^{n-2,n-2}(X,\mathbb R)\times H^{1,1}(X,\mathbb R)$$ 
for a family $([\Omega_i],[\eta_i])\in\mathscr{H}$. Then by Remark \ref{rem_cor} we know $[\beta]=c_i[\eta_i]+[\gamma_i]$ for some $c_i\in\mathbb C$ and $[\gamma_i]\in P_{([\Omega_i],[\eta_i])}$. Therefore,
$$[\Omega_i]\wedge[\eta_i]\wedge[\beta]=c_i[\Omega_i]\wedge[\eta_i^2].$$
Letting $i\to\infty$ and combining $[\Omega]\wedge[\eta^2]\neq0$, we see that $c_i$'s are uniformly bounded, which in turn implies that $[\gamma_i]$'s are uniformly bounded. Up to passing to a subsequence, we may assume that $c_i\to c$ and $[\gamma_i]\to[\gamma]$. Therefore, 
$$[\beta]=c\cdot[\eta]+[\gamma].$$
Note that $[\gamma]\in P_{([\Omega],[\eta])}$, since $[\Omega]\wedge[\eta]\wedge[\gamma]=\lim[\Omega_i]\wedge[\eta_i]\wedge[\gamma_i]=0$. We have proved the existence of the required decomposition.

To see the uniqueness, assume there exists another decomposition $[\beta]=c'\cdot[\eta]+[\gamma']$ for some $c'\in\mathbb C$ and $[\gamma']\in P_{([\Omega],[\eta])}$, which gives that $(c-c')[\eta]=[\gamma']-[\gamma]\in P_{([\Omega],[\eta])}$, i.e. $(c-c')[\Omega]\wedge[\eta]\wedge[\eta]=0$. So $c=c'$ and $[\gamma]=[\gamma']$.

The proof is completed.
\end{proof}

\begin{lem}\label{lem-hi}
Suppose $([\Omega],[\eta])\in\overline{\mathscr{H}}$ and $[\Omega]\wedge[\eta]\neq0$. Then the quadratic form
$$Q_{([\Omega],[\eta])}([\beta],[\gamma]):=\int_X\Omega\wedge\beta\wedge\overline\gamma$$
is semi-negative definite on $P_{([\Omega],[\eta])}$. 
\end{lem}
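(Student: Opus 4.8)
The plan is to pass to the limit in the negative-definiteness on the approximating pairs, controlling the possible degeneration of the primitive subspaces. Write $([\Omega],[\eta])=\lim_i([\Omega_i],[\eta_i])$ with $([\Omega_i],[\eta_i])\in\mathscr{H}$. Fix $[\gamma]\in P_{([\Omega],[\eta])}$; we want $Q_{([\Omega],[\eta])}([\gamma],[\gamma])\le 0$. The naive idea is to approximate $[\gamma]$ by elements $[\gamma_i]\in P_{([\Omega_i],[\eta_i])}$ and use $Q_{([\Omega_i],[\eta_i])}([\gamma_i],[\gamma_i])<0$. The natural candidate is to take $[\gamma_i]$ to be the $Q_{([\Omega_i],[\eta_i])}$-primitive part of $[\gamma]$ in the Lefschetz decomposition with respect to $([\Omega_i],[\eta_i])$, i.e. $[\gamma]=c_i[\eta_i]+[\gamma_i]$ with $[\gamma_i]\in P_{([\Omega_i],[\eta_i])}$.

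First I would show $c_i\to 0$. Applying $[\Omega_i]\wedge[\eta_i]\wedge(-)$ to the decomposition gives $[\Omega_i]\wedge[\eta_i]\wedge[\gamma]=c_i\,[\Omega_i]\wedge[\eta_i^2]$. Since $[\Omega]\wedge[\eta]\wedge[\gamma]=0$ (as $[\gamma]$ is primitive for the limit pair), the left side tends to $0$; and provided $[\Omega]\wedge[\eta^2]\neq 0$ we could divide and conclude $c_i\to 0$, hence $[\gamma_i]=[\gamma]-c_i[\eta_i]\to[\gamma]$, and then $Q_{([\Omega],[\eta])}([\gamma],[\gamma])=\lim_i Q_{([\Omega_i],[\eta_i])}([\gamma_i],[\gamma_i])\le 0$. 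The subtlety is precisely that here we only assume $[\Omega]\wedge[\eta]\neq0$, not $[\Omega]\wedge[\eta^2]\neq 0$, so this division is not available in general. To handle this I would split into two cases. If $[\Omega]\wedge[\eta^2]\neq 0$, the argument above applies verbatim (and one may also invoke Lemma \ref{lem-ld} directly to get the decomposition of the limit). If $[\Omega]\wedge[\eta^2]= 0$, then for $[\gamma]\in P_{([\Omega],[\eta])}$ one checks that $[\eta]$ itself lies in $P_{([\Omega],[\eta])}$ (since $[\Omega]\wedge[\eta]\wedge[\eta]=0$), so by replacing $[\gamma]$ by $[\gamma]-c[\eta]$ for suitable $c$ we may arrange an auxiliary normalization; alternatively, I would perturb $[\eta_i]$ within $\mathscr H$ — e.g. replace $([\Omega_i],[\eta_i])$ by $([\Omega_i],[\eta_i]+\varepsilon_i[\omega_X])$, which still lies in $\mathscr H$ for small $\varepsilon_i>0$ by openness, choosing $\varepsilon_i\to 0$ slowly enough that $[\Omega_i]\wedge([\eta_i]+\varepsilon_i\omega_X)^2$ stays bounded away from $0$ while the pair still converges to $([\Omega],[\eta])$ — and run the previous argument with these perturbed pairs.

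The main obstacle I expect is exactly this control of the scalars $c_i$ when $[\Omega]\wedge[\eta^2]$ degenerates to zero: one must ensure that the primitive approximants $[\gamma_i]$ do not run off to infinity, and the perturbation trick has to be set up so that the perturbed quadratic forms still satisfy strict negativity on their primitive spaces (which they do, being in $\mathscr H$) and so that the limit of the perturbed primitive decompositions recovers a decomposition of $[\gamma]$ with vanishing $[\eta]$-component. Once the $c_i\to 0$ (equivalently, boundedness of $[\gamma_i]$) is secured, the rest is a routine continuity argument: $Q$ is a continuous quadratic form in its three slots, strict negativity passes to a non-strict inequality in the limit, and since $[\gamma]\in P_{([\Omega],[\eta])}$ was arbitrary we conclude $Q_{([\Omega],[\eta])}$ is semi-negative definite on $P_{([\Omega],[\eta])}$.
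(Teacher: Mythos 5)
Your argument for the case $[\Omega]\wedge[\eta^2]\neq 0$ is fine, and you have correctly isolated the real difficulty: when $[\Omega]\wedge[\eta^2]=0$ the Lefschetz projection along $[\eta_i]$ has denominator $\int_X\Omega_i\wedge\eta_i^2\to 0$, so the coefficients $c_i$ are not controlled. But neither of your proposed repairs closes this case. The perturbation trick is internally inconsistent: if $([\Omega_i],[\eta_i]+\varepsilon_i[\omega_X])$ converges to $([\Omega],[\eta])$, then by continuity of the cup product $[\Omega_i]\wedge([\eta_i]+\varepsilon_i\omega_X)^2\to[\Omega]\wedge[\eta^2]=0$, so this quantity \emph{cannot} stay bounded away from $0$ no matter how slowly $\varepsilon_i\to 0$; the two requirements you impose on $\varepsilon_i$ are mutually exclusive. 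The other suggestion (replacing $[\gamma]$ by $[\gamma]-c[\eta]$, noting $[\eta]\in P_{([\Omega],[\eta])}$) does not address the problem at all: the issue is approximating $[\gamma]$ by elements of $P_{([\Omega_i],[\eta_i])}$, and adding a multiple of $[\eta]$ changes nothing about that.

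The fix is to stop projecting along $[\eta_i]$ and instead project along a direction that stays transverse to the limiting hyperplane. Set $\ell_i([\gamma]):=\int_X\Omega_i\wedge\eta_i\wedge\gamma$ and $\ell([\gamma]):=\int_X\Omega\wedge\eta\wedge\gamma$, so $P_{([\Omega_i],[\eta_i])}=\ker\ell_i$ and $P_{([\Omega],[\eta])}=\ker\ell$. The hypothesis $[\Omega]\wedge[\eta]\neq 0$ gives, via Poincar\'e duality, a class $[v]$ with $\ell([v])\neq 0$, hence $\ell_i([v])\neq 0$ for large $i$. For $[\gamma]\in\ker\ell$ put
$$[\gamma_i]:=[\gamma]-\frac{\ell_i([\gamma])}{\ell_i([v])}\,[v]\in\ker\ell_i=P_{([\Omega_i],[\eta_i])}.$$
Since $\ell_i([\gamma])\to\ell([\gamma])=0$ and $\ell_i([v])\to\ell([v])\neq 0$, we get $[\gamma_i]\to[\gamma]$, and then $Q_{([\Omega],[\eta])}([\gamma],[\gamma])=\lim_i Q_{([\Omega_i],[\eta_i])}([\gamma_i],[\gamma_i])\le 0$. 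This is the content of the paper's one-line ``by continuity (here we need $[\Omega]\wedge[\eta]\neq 0$)'': the nonvanishing of $[\Omega]\wedge[\eta]$ is used exactly to make the hyperplanes $\ker\ell_i$ converge to $\ker\ell$, not to control a Lefschetz coefficient. Your choice $[v]=[\eta_i]$ is precisely the one choice that fails when $[\Omega]\wedge[\eta^2]=0$, because then $\ell([\eta])=0$.
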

Note that here we only assume $[\Omega]\wedge[\eta]\neq0$.
\begin{proof}
Write $([\Omega],[\eta])=\lim([\Omega_i],[\eta_i])$ as above. Since $Q_{([\Omega_i],[\eta_i])}$ is negative definite on $P_{([\Omega_i],[\eta_i])}$, by continuity (here we need $[\Omega]\wedge[\eta]\neq0$) we immediately conclude that $Q_{([\Omega],[\eta])}$
is semi-negative definite on $P_{([\Omega],[\eta])}$.
\end{proof}

\begin{lem}[characterization of zero-eigenvector]\label{lem-0eigenvector}
Suppose $([\Omega],[\eta])\in\overline{\mathscr{H}}$ and $[\Omega]\wedge[\eta^2]\neq0$. Then for $[\gamma]\in P_{([\Omega],[\eta])}$, 
$$Q_{([\Omega],[\eta])}([\gamma],[\gamma])=0 \Longleftrightarrow [\Omega]\wedge[\gamma]=0.$$
\end{lem}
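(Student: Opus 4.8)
The plan is to prove the two implications separately, the easy direction being that $[\Omega]\wedge[\gamma]=0$ trivially forces $Q_{([\Omega],[\eta])}([\gamma],[\gamma])=\int_X\Omega\wedge\gamma\wedge\overline\gamma=0$. For the reverse, nontrivial direction, suppose $[\gamma]\in P_{([\Omega],[\eta])}$ satisfies $Q_{([\Omega],[\eta])}([\gamma],[\gamma])=0$. By Lemma \ref{lem-hi} (applicable since $[\Omega]\wedge[\eta^2]\neq0$ implies $[\Omega]\wedge[\eta]\neq0$) we know $Q_{([\Omega],[\eta])}$ is semi-negative definite on $P_{([\Omega],[\eta])}$, so $[\gamma]$ is a null vector of a semi-negative definite form. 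By the Cauchy--Schwarz inequality for semi-definite forms, a null vector lies in the radical, i.e. $Q_{([\Omega],[\eta])}([\gamma],[\delta])=0$ for every $[\delta]\in P_{([\Omega],[\eta])}$.

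Next I would upgrade this to vanishing of $Q_{([\Omega],[\eta])}([\gamma],[\delta])$ against \emph{all} of $H^{1,1}(X,\mathbb C)$. Using the Lefschetz decomposition of Lemma \ref{lem-ld}, write an arbitrary $[\delta]\in H^{1,1}(X,\mathbb C)$ as $[\delta]=c[\eta]+[\delta']$ with $[\delta']\in P_{([\Omega],[\eta])}$. Then
$$
\int_X\Omega\wedge\gamma\wedge\overline\delta=\bar c\int_X\Omega\wedge\gamma\wedge\eta+\int_X\Omega\wedge\gamma\wedge\overline{\delta'}=0,
$$
where the first term vanishes because $[\gamma]\in P_{([\Omega],[\eta])}$ means exactly $[\Omega]\wedge[\eta]\wedge[\gamma]=0$, and the second vanishes by the radical property just established. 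Hence the class $[\Omega]\wedge[\gamma]\in H^{n-1,n-1}(X,\mathbb C)$ pairs to zero with every class in $H^{1,1}(X,\mathbb C)$ under the cup product $H^{1,1}\times H^{n-1,n-1}\to H^{n,n}(X,\mathbb C)\cong\mathbb C$.

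The final step is to conclude $[\Omega]\wedge[\gamma]=0$ from this pairing-vanishing, and this is where a little care is needed: the cup-product pairing between $H^{1,1}(X,\mathbb C)$ and $H^{n-1,n-1}(X,\mathbb C)$ is a perfect pairing by Poincar\'e duality together with the Hodge decomposition (the $(n-1,n-1)$-part pairs nontrivially only with the $(1,1)$-part). Therefore a class in $H^{n-1,n-1}(X,\mathbb C)$ annihilated by all of $H^{1,1}(X,\mathbb C)$ must be zero, giving $[\Omega]\wedge[\gamma]=0$ as desired. I expect this Poincar\'e-duality step to be the only substantive point, and it is standard; the rest is bookkeeping with Lemmas \ref{lem-ld} and \ref{lem-hi}. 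This completes the proof.
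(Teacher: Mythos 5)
Your proposal is correct and follows essentially the same route as the paper: reduce to showing $Q([\gamma],\cdot)$ vanishes on all of $H^{1,1}(X,\mathbb C)$ via the decomposition $H^{1,1}=\mathbb C[\eta]\oplus P_{([\Omega],[\eta])}$ from Lemma \ref{lem-ld}, kill the $P$-component using semi-negativity from Lemma \ref{lem-hi}, and finish by Poincar\'e duality. The only cosmetic difference is that you invoke Cauchy--Schwarz for semi-definite Hermitian forms to place the null vector $[\gamma]$ in the radical, whereas the paper writes out the underlying discriminant argument (expanding $Q([\gamma]+t[\gamma'],[\gamma]+t[\gamma'])\le 0$ and its $\sqrt{-1}[\gamma]$ variant) explicitly.
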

\begin{proof}
It suffices to prove the ``$\Rightarrow$'' direction, as the other direction is trivial. The proof is similar to \cite[Proposition 4.1]{DN}. Since we are in a slightly more abstract setting, for convenience let's present some details. Given $[\gamma]\in P_{([\Omega],[\eta])}$ with $Q_{([\Omega],[\eta])}([\gamma],[\gamma])=0$, to check that $[\Omega]\wedge[\gamma]=0$, by Poincar\'e duality and the definition of $Q=Q_{[\Omega],[\eta])}$, it suffices to examine that $Q([\gamma],\cdot)$ is the zero functional on $H^{1,1}(X,\mathbb C)$. On the other hand, note that $H^{1,1}(X,\mathbb C)=\mathbb C[\eta]\oplus P_{([\Omega],[\eta])}$ by Lemma \ref{lem-ld}, therefore, to check $Q([\gamma],\cdot)$ is the zero functional on $H^{1,1}(X,\mathbb C)$, it suffices to examine that $Q([\gamma],[\eta])=0$ and $Q([\gamma],\cdot)=0$ on $P_{([\Omega],[\eta])}$. The former one is trivially true as $[\gamma]\in P_{([\Omega],[\eta])}$. To see the latter one, arbitrarily take $[\gamma']\in P_{([\Omega],[\eta])}$. By Lemma \ref{lem-hi} and $Q_{([\Omega],[\eta])}([\gamma],[\gamma])=0$, we have, for any $t\in\mathbb R$,
\begin{align}
0\ge Q([\gamma]+t[\gamma'],[\gamma]+t[\gamma'])=2t\cdot Re(Q([\gamma],[\gamma']))+t^2\cdot Q([\gamma'],[\gamma']),
\end{align}
from which, by an elementary analysis, we conclude that $Re(Q([\gamma],[\gamma']))=0$. Similarly consider $Q(\sqrt{-1}[\gamma]+t[\gamma'],\sqrt{-1}[\gamma]+t[\gamma'])$, we conclude that $Im(Q([\gamma],[\gamma']))=0$. Therefore, $Q([\gamma],[\gamma'])=0$, as desired.

The proof is completed.
\end{proof}

The above arguments can be applied to prove
\begin{lem}\label{lem-int}
Suppose $([\Omega],[\eta])\in\overline{\mathscr{H}}$ and $[\Omega]\wedge[\eta^2]\neq0$. Then 
$$\int_X[\Omega]\wedge[\eta^2]\neq0$$.
\end{lem}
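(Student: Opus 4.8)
The plan is to argue by contradiction: suppose $([\Omega],[\eta])\in\overline{\mathscr{H}}$ with $[\Omega]\wedge[\eta^2]\neq0$ but $\int_X\Omega\wedge\eta^2=0$. By Lemma \ref{lem-ld} (which applies since $[\Omega]\wedge[\eta^2]\neq0$), every class in $H^{1,1}(X,\mathbb C)$ decomposes as $c[\eta]+[\gamma]$ with $[\gamma]\in P_{([\Omega],[\eta])}$. The assumption $\int_X\Omega\wedge\eta^2=0$ says precisely that $Q_{([\Omega],[\eta])}([\eta],[\eta])=0$. Since also $\int_X\Omega\wedge\eta\wedge\gamma=0$ for every $[\gamma]\in P$ by definition of the primitive space, I would first observe that $Q_{([\Omega],[\eta])}([\eta],\cdot)$ vanishes on the $[\eta]$-line and, I will show, on all of $P$ as well; hence $Q([\eta],\cdot)$ is the zero functional on $H^{1,1}(X,\mathbb C)$, which by Poincaré duality forces $[\Omega]\wedge[\eta]=0$, and then immediately $[\Omega]\wedge[\eta^2]=0$, contradicting the hypothesis.

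The one genuinely substantive point is showing $Q([\eta],[\gamma'])=0$ for an arbitrary $[\gamma']\in P_{([\Omega],[\eta])}$, and here the proof of Lemma \ref{lem-0eigenvector} gives the template almost verbatim. Write $([\Omega],[\eta])=\lim([\Omega_i],[\eta_i])$ with $([\Omega_i],[\eta_i])\in\mathscr{H}$; since $[\Omega]\wedge[\eta]\neq0$ (which follows from $[\Omega]\wedge[\eta^2]\neq0$), Lemma \ref{lem-hi} applies and $Q=Q_{([\Omega],[\eta])}$ is semi-negative definite on $P_{([\Omega],[\eta])}$. But I want to run the semi-negativity/Cauchy--Schwarz argument on the vector $[\eta]$ itself, which does \emph{not} lie in $P$. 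To handle this, I would instead use the following variant: for any $[\gamma']\in P$ and any $t\in\mathbb R$, consider the class $[\eta]+t[\gamma']$ and pair it against $[\gamma']$ using $Q$; more cleanly, use that on the whole of $H^{1,1}(X,\mathbb R)$ the form $Q$ satisfies the reverse Cauchy--Schwarz inequality coming from Remark \ref{rem_cor}(c) (the Khovanskii--Teissier inequality with respect to $([\Omega_i],[\eta_i])$) after passing to the limit, or simply note that $Q([\eta]+t[\gamma'],[\eta]+t[\gamma']) = 2t\,\mathrm{Re}\,Q([\eta],[\gamma']) + t^2 Q([\gamma'],[\gamma'])$ together with the sign information $Q([\gamma'],[\gamma'])\le 0$ and the fact that the left side, being $\int_X\Omega\wedge([\eta]+t\gamma')^2$, has a definite sign by the limiting Khovanskii--Teissier inequality — indeed $\left(\int_X\Omega\wedge\eta\wedge(\eta+t\gamma')\right)^2\ge \int_X\Omega\wedge\eta^2\cdot\int_X\Omega\wedge(\eta+t\gamma')^2$, and the left side is $0$, forcing $\int_X\Omega\wedge(\eta+t\gamma')^2\le 0$ whenever $\int_X\Omega\wedge\eta^2=0$ and $[\Omega]\wedge[\eta]\neq 0$. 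Then the elementary analysis of the quadratic $2t\,\mathrm{Re}\,Q([\eta],[\gamma'])+t^2Q([\gamma'],[\gamma'])\le 0$ for all $t\in\mathbb R$ gives $\mathrm{Re}\,Q([\eta],[\gamma'])=0$, and repeating with $\sqrt{-1}[\eta]$ in place of $[\eta]$ gives $\mathrm{Im}\,Q([\eta],[\gamma'])=0$, hence $Q([\eta],[\gamma'])=0$.

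Once $Q([\eta],\cdot)\equiv 0$ on $\mathbb C[\eta]\oplus P_{([\Omega],[\eta])}=H^{1,1}(X,\mathbb C)$, Poincaré duality (the pairing $H^{n-1,n-1}(X,\mathbb C)\times H^{1,1}(X,\mathbb C)\to\mathbb C$ is non-degenerate) yields $[\Omega]\wedge[\eta]=0$ in $H^{n-1,n-1}(X,\mathbb C)$, hence $[\Omega]\wedge[\eta^2]=0$, contradicting the standing assumption. This completes the proof. I expect the main obstacle to be organizing the limiting argument cleanly so that the Khovanskii--Teissier inequality of Remark \ref{rem_cor}(c), which is stated for the genuine pairs $([\Omega_i],[\eta_i])\in\mathscr{H}$, is correctly passed to the limit in the form $\left(\int_X\Omega\wedge\eta\wedge\psi\right)^2\ge\int_X\Omega\wedge\eta^2\cdot\int_X\Omega\wedge\psi^2$ for all real $[\psi]$ — this is just continuity of the intersection pairing, but one must be a little careful that no positivity of $\psi$ is needed here since we only use it for $\psi=\eta+t\gamma'$ and the inequality for the $([\Omega_i],[\eta_i])$ already holds for $2$-positive $\phi=\eta_i$ against arbitrary $\psi$.
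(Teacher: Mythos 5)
Your overall skeleton --- assume $\int_X\Omega\wedge\eta^2=0$, show that $Q([\eta],\cdot)$ vanishes on $\mathbb C[\eta]\oplus P_{([\Omega],[\eta])}=H^{1,1}(X,\mathbb C)$ via Lemma \ref{lem-ld}, then use Poincar\'e duality to get $[\Omega]\wedge[\eta]=0$ and contradict $[\Omega]\wedge[\eta^2]\neq0$ --- is exactly the paper's (alternative) proof of this lemma, and it works. But the paragraph you devote to the ``one genuinely substantive point,'' namely $Q([\eta],[\gamma'])=0$ for $[\gamma']\in P_{([\Omega],[\eta])}$, is both unnecessary and, as written, broken. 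It is unnecessary because the vanishing is definitional: $[\gamma']\in P_{([\Omega],[\eta])}$ means $[\Omega]\wedge[\eta]\wedge[\gamma']=0$, and since $[\Omega]\wedge[\eta]$ is real this forces $Q([\eta],[\gamma'])=\int_X\Omega\wedge\eta\wedge\overline{\gamma'}=\overline{\int_X\Omega\wedge\eta\wedge\gamma'}=0$ outright --- you even record this fact (``by definition of the primitive space'') before setting out to reprove it. And the argument you give would fail if it were actually needed: under your standing assumption $\int_X\Omega\wedge\eta^2=0$, the limiting Khovanskii--Teissier inequality $\left(\int_X\Omega\wedge\eta\wedge(\eta+t\gamma')\right)^2\ge\int_X\Omega\wedge\eta^2\cdot\int_X\Omega\wedge(\eta+t\gamma')^2$ has right-hand side equal to $0\cdot\int_X\Omega\wedge(\eta+t\gamma')^2=0$, so it reads $0\ge0$ and gives no control whatsoever on the sign of $\int_X\Omega\wedge(\eta+t\gamma')^2$; the claimed deduction that this quantity is $\le0$ does not follow. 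Deleting that entire paragraph leaves a correct proof.

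You also miss the paper's primary, one-line argument: $[\Omega]\wedge[\eta^2]$ is a class in $H^{n,n}(X,\mathbb R)$, which is one-dimensional with integration over $X$ an isomorphism onto $\mathbb R$ (equivalently, $[\Omega]\wedge[\eta^2]=c\,[\omega_X^n]$ with $c\neq0$). A nonzero top-degree class therefore has nonzero integral, with no limiting argument, decomposition, or duality required.
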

\begin{proof}
Indeed, since $H^{n,n}(X,\mathbb R)$ is one-dimensional, $[\Omega]\wedge[\eta^2]=c\cdot[\omega_X^n]$ for some nonzero $c\in\mathbb R$. So $\int_X[\Omega]\wedge[\eta^2]\neq0$.

Alternatively we may apply the above arguments to check this. Assume a contradiction that $\int_X[\Omega]\wedge[\eta^2]=0$, i.e. $Q([\eta],[\eta])=0$. Obviously we also have $Q([\eta],\cdot)=0$ on $P_{([\Omega],[\eta])}$. Therefore, $Q([\eta],\cdot)=0$ on $H^{1,1}(X,\mathbb C)$, and hence $[\Omega]\wedge[\eta]=0$, which contradicts to $[\Omega]\wedge[\eta^2]\neq0$.

The proof is completed.
\end{proof}

\begin{lem}\label{lem-ld'}
Suppose $([\Omega],[\eta])\in\overline{\mathscr{H}}$ and $[\Omega]\wedge[\eta^2]\neq0$. Then for $[\gamma]\in H^{1,1}(X,\mathbb C)$, 
$$\int_X[\Omega]\wedge[\eta]\wedge[\gamma]=0 \Longleftrightarrow [\gamma]\in P_{([\Omega],[\eta])}.$$
\end{lem}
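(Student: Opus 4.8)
The plan is to deduce this directly from the Lefschetz decomposition (Lemma~\ref{lem-ld}) together with the non-vanishing statement in Lemma~\ref{lem-int}. The ``$\Leftarrow$'' direction is immediate: if $[\gamma]\in P_{([\Omega],[\eta])}$ then by the very definition of the primitive space $[\Omega]\wedge[\eta]\wedge[\gamma]=0$ in $H^{n-1,n-1}(X,\mathbb C)$, hence $\int_X[\Omega]\wedge[\eta]\wedge[\gamma]=0$.

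For the ``$\Rightarrow$'' direction, suppose $\int_X[\Omega]\wedge[\eta]\wedge[\gamma]=0$. First I would invoke Lemma~\ref{lem-ld} to write the unique decomposition $[\gamma]=c\cdot[\eta]+[\gamma']$ with $c\in\mathbb C$ and $[\gamma']\in P_{([\Omega],[\eta])}$; this uses the standing hypothesis $[\Omega]\wedge[\eta^2]\neq0$. Wedging with $[\Omega]\wedge[\eta]$ and using $[\Omega]\wedge[\eta]\wedge[\gamma']=0$ gives
$$[\Omega]\wedge[\eta]\wedge[\gamma]=c\cdot[\Omega]\wedge[\eta^2].$$
Integrating over $X$ and using the assumption yields $c\cdot\int_X[\Omega]\wedge[\eta^2]=0$, and since $\int_X[\Omega]\wedge[\eta^2]\neq0$ by Lemma~\ref{lem-int}, we conclude $c=0$. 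Hence $[\gamma]=[\gamma']\in P_{([\Omega],[\eta])}$, as desired.

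There is no real obstacle here; the statement is essentially a formal consequence of the two preceding lemmas. The only point to be careful about is that the argument genuinely requires the full strength $[\Omega]\wedge[\eta^2]\neq0$ (needed both for the Lefschetz decomposition in Lemma~\ref{lem-ld} and for the non-degeneracy in Lemma~\ref{lem-int}), rather than the weaker $[\Omega]\wedge[\eta]\neq0$ used in Lemma~\ref{lem-hi}.
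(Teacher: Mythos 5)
Your argument is correct and is essentially identical to the paper's proof: both reduce the ``$\Rightarrow$'' direction to the unique decomposition $[\gamma]=c\cdot[\eta]+[\gamma']$ from Lemma~\ref{lem-ld} and then kill $c$ using the non-vanishing of $\int_X[\Omega]\wedge[\eta^2]$ from Lemma~\ref{lem-int}. Nothing to add.
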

\begin{proof}
It suffices to prove the ``$\Rightarrow$'' direction, as the other direction is trivial. The proof is a simple application of Lemma \ref{lem-ld}. Indeed, by Lemma \ref{lem-ld} we write $[\gamma]=c\cdot[\eta]+[\gamma']$ for some $c\in\mathbb C$ and $[\gamma']\in P_{([\Omega],[\eta])}$, then
$$0=\int_X[\Omega]\wedge[\eta]\wedge[\gamma]=c\cdot\int_X[\Omega]\wedge[\eta^2],$$
which, combining Lemma \ref{lem-int}, gives $c=0$.

The proof is completed.
\end{proof}

\section{Proofs of the results}\label{sect-pf}
Given the above preparations, now we are ready to prove our results stated in the introduction. 

\subsection{A general result} 
We first prove a general result for \emph{abstract} elements on the boundary of $\overline{\mathscr{H}}$.
\begin{prop}\label{thm4}
Suppose $([\Omega],[\eta])\in\overline{\mathscr{H}}$ and $[\Omega]\wedge[\eta^2]\neq0$. Then for any $[\beta]\in H^{1,1}(X,\mathbb R)$,
\begin{align}\label{pf'-kt}
\left(\int_X[\Omega]\wedge[\eta]\wedge[\beta]\right)^2=\int_X[\Omega]\wedge[\eta^2]\int_X[\Omega]\wedge[\beta^2].
\end{align} 
if and only if $[\Omega\wedge\eta]$ and $[\Omega\wedge\beta]$ are proportional.
\end{prop}

\begin{proof}
Denote $Q=Q_{([\Omega],[\alpha])}$. Consider a function $f(t)$ for $t\in\mathbb R$:
$$f(t):=Q([t\eta+\beta],[t\eta+\beta])=t^2Q([\eta],[\eta])+2tQ([\eta],[\beta])+Q([\beta],[\beta]).$$
May assume $Q([\eta],[\eta])>0$. The equality \eqref{pf'-kt} implies that there exists exactly one $t_0\in\mathbb R$ with $f(t_0)=0$, and hence $f(t)>0$ for any $t\neq t_0$. We seperate the discussions into the following two cases.\\

\noindent\underline{\textbf{Case 1}}: $Q([\beta],[\beta])=0$. 

Then by an elementary analysis we conclude that $Q([\eta],[\beta])=0$, which implies $[\beta]\in P_{([\Omega],[\eta])}$ by Lemma \ref{lem-ld'}. Therefore, by Lemma \ref{lem-0eigenvector} we see $[\Omega]\wedge[\beta]=0$, and hence $[\Omega]\wedge[\eta]$ and $[\Omega]\wedge[\beta]$ are trivially proportional.\\

\noindent\underline{\textbf{Case 2}}: $Q([\beta],[\beta])\neq0$. 

Then $Q([\beta],[\beta])>0$. Up to rescaling $[\beta]$, we may assume $Q([\beta],[\beta])=Q([\eta],[\eta])$. Moreover, up to changing $[\beta]$ to $-[\beta]$, by \eqref{pf'-kt} we may assume $Q([\eta],[\beta])=Q([\beta],[\beta])=Q([\eta],[\eta])$. Therefore, the unique zero of $f$ is $t_0=-1$, and we arrive at
\begin{align}\label{eq-0}
Q([\eta-\beta],[\eta-\beta])=0,
\end{align}
and
$$\int_X\Omega\wedge\eta\wedge(\eta-\beta)=0.$$
By Lemma \ref{lem-ld'} the latter implies 
\begin{align}\label{eq-00}
[\eta-\beta]\in P_{([\Omega],[\eta])}.
\end{align}
Combining \eqref{eq-0} and \eqref{eq-00}, by Lemma \ref{lem-0eigenvector} we conclude that $[\Omega]\wedge[\eta-\beta]=0$, i.e. $[\Omega]\wedge[\eta]$ and $[\Omega]\wedge[\beta]$ are proportional.

Proposition \ref{thm4} is proved.
\end{proof}

\subsection{Proof of Theorem \ref{thm1}} We now prove Theorem \ref{thm1}.

\begin{proof}[Proof of Theorem \ref{thm1}]
Given the setup of Theorem \ref{thm1}, set $[\Omega]:=[\alpha_1]\wedge...\wedge[\alpha_{n-2}]$. May assume $[\Omega]\neq0$.

Note that Item (A2) is already contained in Proposition \ref{thm4} as a special case.

Next we look at Item (A1). Firstly we have $([\Omega],[\omega_X])\in\overline{\mathscr{H}}$ and $[\Omega]\wedge[\omega_X^2]\neq0$ (see Examples \ref{exmp-hi-bdy}(1)). Denote $Q:=Q_{([\Omega],[\omega_X])}$ If $\int_X\Omega\wedge\alpha^2\neq0$, applying Proposition \ref{thm4} gives the desired result. Now assume $\int_X\Omega\wedge\alpha^2=0$. Also assume $[\Omega\wedge\alpha]\neq0$ (otherswise the required result is trivially true). The equality \eqref{kt-eq-1} implies $\int_X\Omega\wedge\alpha\wedge\beta=0$, therefore, $[\beta]\in P_{([\Omega],[\alpha])}$ and by Lemma \ref{lem-hi},
\begin{align}\label{betale0}
\int_X\Omega\wedge\beta^2\le0.
\end{align}
On the other hand, by assumption, $\int_X\Omega\wedge\beta^2\ge0$. So we conclude that 
\begin{align}\label{beta=0}
\int_X\Omega\wedge\beta^2=0.
\end{align}
In this case, we have
\begin{align}\label{eq-a1}
Q([s\alpha+t\beta],[s\alpha+t\beta])=0,\,\,\,\forall s,t\in\mathbb R.
\end{align}
Moreover, we can always fix $s_0,t_0\in\mathbb R$, at least one of which is non-zero, such that
$$\int_X\Omega\wedge\omega_X\wedge(s_0\alpha+t_0\beta)=0,$$
which implies 
\begin{align}\label{eq-a1'}[s_0\alpha+t_0\beta]\in P_{([\Omega],[\omega_X])}
\end{align} 
by Lemma \ref{lem-ld'}. Applying Lemma \ref{lem-0eigenvector} with \eqref{eq-a1} and \eqref{eq-a1'}, we conclude $[\Omega]\wedge[s_0\alpha+t_0\beta]=0$, proving the desired result.

Theorem \ref{thm1} is proved.
\end{proof}

\begin{rem}\label{pf-rem}
In Li's arguments in \cite[Proof of Theorem 3.9]{Li17} proving the case that all the involved classes $[\alpha_1],...,[\alpha_{n-2}],[\alpha],[\beta]$ are nef and big, Shephard inequality (see \cite{Sh} or \cite[Theorem 3.5]{Li17}) plays a crucial role (note that Shephard inequality also works when all the involved classes are nef). However, for the setting of Theorem \ref{thm1}, the last entry $[\beta] $ is no longer assumed to be nef, resulting that Shephard inequality can not be applied. Our proof here provides an alternative proof for  \cite[Theorem 3.9]{Li17} without using Shephard inequality.
\end{rem}

\subsection{Proof of Theorem \ref{thm1'}}
\begin{proof}[Proof of Theorem \ref{thm1'}]

Note that the implication $(2)\Rightarrow(3)$ is trivial. 

\underline{$(3)\Rightarrow(1)$}: Since $([\Omega],[\theta])\in\mathscr{H}$, the map $[\Omega]\wedge\cdot: H^{1,1}(X,\mathbb C)\to H^{n-1,n-1}(X,\mathbb C)$ is an isomorphism (see Remark \ref{rem_cor}(a)), and hence $[\Omega]\wedge[\alpha]$ and $[\Omega]\wedge[\beta]$ are proportional if and only if $[\alpha]$ and $[\beta]$ are proportional. Then applying Theorem \ref{thm1} gives the result.

\underline{$(1)\Rightarrow(2)$}: Let's only look at the Case (A1), as the other cases can be checked similarly. Denote $[\Omega]:=[\alpha_1\wedge...\wedge\alpha_{n-2}]$. Note that, since condition (1) holds, $[\Omega]\neq0$ and hence $\int_X[\Omega]\wedge[\omega^2]\neq0$ for any K\"ahler metric $\omega$ (see Example \ref{exmp-hi-bdy}(1)).  Assume a contradiction that $([\Omega],[\omega_X])$ does not satisfy Hodge index theorem. Then $([\Omega],[\omega_X])\in\overline{\mathscr{H}}\setminus\mathscr{H}$. By definition of $\mathscr{H}$ and Lemme \ref{lem-hi}, there exists $[\gamma]\in P_{([\Omega],[\omega_X])}\setminus\{0\}$ such that 
\begin{align}\label{gamma}
Q_{([\Omega],[\omega_X])}([\gamma],[\gamma])=0,
\end{align} 
and hence 
\begin{align}\label{gamma'}
[\Omega]\wedge[\gamma]=0
\end{align}
by Lemma \ref{lem-0eigenvector}. May assume $[\gamma]$ is real, and then fix a K\"ahler metric $\omega$ such that $[\omega+\gamma]$ is a K\"ahler class. Then the equality \eqref{kt-eq-1'} holds with $[\alpha]=[\omega]$ and $[\beta]=[\omega+\gamma]$ by \eqref{gamma'}. So $[\omega]$ and $[\omega+\gamma]$ are proportional, which in turn gives that $[\omega]$ and $[\gamma]$ are proportional, as both $[\omega]$ and $[\gamma]$ are non-zero. Write $[\gamma]=c\cdot[\omega]$ with $c\neq0$. We arrive at
$$Q_{([\Omega],[\omega_X])}([\gamma],[\gamma])=c^2\cdot\int_X[\Omega]\wedge[\omega^2]\neq0,$$
contradicting to \eqref{gamma}. The proof of implication $(1)\Rightarrow(2)$ is completed.

Theorem \ref{thm1'} is proved.
\end{proof}

\section{Extensions}\label{ext}
In this section, we fix an integer $m\le n-2$ and $m$ K\"ahler metrics $\omega_1,...,\omega_{m}$ on $X$. Let 
$\Gamma\subset H^{1,1}(X,\mathbb R)$ be the convex open cone defined in Definition \ref{defn-cone}, and let $\overline\Gamma$ be the closure of $\Gamma$ in $H^{1,1}(X,\mathbb R)$. Arbitrarily take $[\alpha_1],...,[\alpha_{n-m-2}],[\alpha]\in\overline\Gamma$, $[\beta]\in H^{1,1}(X,\mathbb R)$ and set $[\Omega]:=[\omega_1]\wedge...\wedge[\omega_{m}]\wedge[\alpha_{1}]\wedge...\wedge[\alpha_{n-m-2}]$, then we have
\begin{align}\label{kt-general}
\left(\int_X\Omega\wedge\alpha\wedge\beta\right)^2\ge\int_X
\Omega\wedge\alpha^2\cdot\int_X\Omega\wedge\beta^2.
\end{align}
The \eqref{kt-general} is contained in \cite[Theorem 1.6 and Theorem 2.10]{Z} (also see \cite{Co,X} for the special case that $\omega_1=...=\omega_{m}=\omega_X$) when each $[\alpha_j]\in\Gamma$; then taking a limit gives \eqref{kt-general}. Comparing with \eqref{kt-nef}, $[\alpha_j]$'s and $[\alpha]$ in \eqref{kt-general} are not necessarily nef, however we pay the price that the $\omega_j$'s in \eqref{kt-general} have been assumed to be K\"ahler (see Example \ref{exmp-semipos} for a special case where none of the involved $(1,1)$-classes is Kahler). Given the inequality \eqref{kt-general}, it is also natural to consider the following generalized Teissier problem:
\begin{prob}[Teissier, a generalized version]\label{teissier-prob'}
Characterize the equality case in \eqref{kt-general}.
\end{prob}

\subsection{Results on Problem \ref{teissier-prob'}}
The above discussions in Sections \ref{sect-hi} and \ref{sect-pf} are valid for \emph{abstract} elements on the boundary of $\overline{\mathscr{H}}$, and hence can be identically applied to study Problem \ref{teissier-prob'} and give the following results.

\begin{thm}\label{thm2}
Suppose $[\alpha_1],...,[\alpha_{n-m-2}],[\alpha],[\beta]\in H^{1,1}(X,\mathbb R)$ and set $[\Omega]:=[\omega_1\wedge...\wedge\omega_{m}\wedge\alpha_{1}\wedge...\wedge\alpha_{n-m-2}]$. Assume either of the followings is satisfied.
\begin{itemize}
\item[(C1)] $[\alpha_1],...,[\alpha_{n-m-2}],[\alpha]\in\overline\Gamma$ and $\int_X\Omega\wedge\beta^2\ge0$.
\item[(C2)] $[\alpha_1],...,[\alpha_{n-m-2}],[\alpha]\in\overline\Gamma$, and $\int_X\Omega\wedge\alpha^2>0$.
\end{itemize}
Then 
\begin{align}
\left(\int_X\Omega\wedge\alpha\wedge\beta\right)^2=\int_X\Omega
\wedge\alpha^2\cdot\int_X\Omega\wedge\beta^2\nonumber
\end{align}
if and only if $[\Omega\wedge\alpha]$ and $[\Omega\wedge\beta]$ are proportional.
\end{thm}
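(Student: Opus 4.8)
The plan is to reduce Theorem \ref{thm2} to the already-established Theorem \ref{thm4}, by checking that the pair $([\Omega],[\omega_X])$ (or $([\Omega],[\alpha])$ in the appropriate case) lies in $\overline{\mathscr{H}}$ with $[\Omega]\wedge[\eta^2]\neq0$. Concretely, since $[\alpha_1],\dots,[\alpha_{n-m-2}],[\alpha]\in\overline\Gamma$, each is a limit of classes in the open cone $\Gamma$; by Example \ref{example}(2), for classes inside $\Gamma$ the relevant pair satisfies the Hodge index theorem, so passing to the limit shows $([\Omega],[\omega_X])\in\overline{\mathscr{H}}$. The content of Example \ref{exmp-hi-bdy}(2) further gives that $[\Omega]\wedge[\omega_X]$ and $[\Omega]\wedge[\omega_X^2]$ are represented by non-zero positive currents, so under the standing assumption $[\Omega]\neq0$ we get $[\Omega]\wedge[\omega_X^2]\neq0$. (As in Theorem \ref{thm1}, the case $[\Omega]=0$ is trivial and may be discarded at the outset.)

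First I would treat Case (C2). Here $\int_X\Omega\wedge\alpha^2>0$, so $([\Omega],[\alpha])$ itself is a limit of Hodge-index pairs with $[\Omega]\wedge[\alpha^2]\neq0$ (the positivity of the integral forces this), and Theorem \ref{thm4} applied with $[\eta]=[\alpha]$ directly yields the equivalence between the equality and the proportionality of $[\Omega\wedge\alpha]$ and $[\Omega\wedge\beta]$. Next, for Case (C1), I would mirror the structure of the proof of Theorem \ref{thm1}(A1): if $\int_X\Omega\wedge\alpha^2\neq0$ we are reduced to the previous paragraph; if $\int_X\Omega\wedge\alpha^2=0$ (and $[\Omega\wedge\alpha]\neq0$, else trivial), then the assumed equality forces $\int_X\Omega\wedge\alpha\wedge\beta=0$, so $[\beta]\in P_{([\Omega],[\alpha])}$ and Lemma \ref{lem-hi} gives $\int_X\Omega\wedge\beta^2\le0$; combined with the hypothesis $\int_X\Omega\wedge\beta^2\ge0$ this yields $\int_X\Omega\wedge\beta^2=0$. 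Then $Q_{([\Omega],[\omega_X])}([s\alpha+t\beta],[s\alpha+t\beta])=0$ for all $s,t$, and one picks $(s_0,t_0)\neq(0,0)$ with $\int_X\Omega\wedge\omega_X\wedge(s_0\alpha+t_0\beta)=0$ so that $[s_0\alpha+t_0\beta]\in P_{([\Omega],[\omega_X])}$ by Lemma \ref{lem-ld'}; Lemma \ref{lem-0eigenvector} then forces $[\Omega]\wedge[s_0\alpha+t_0\beta]=0$, i.e. proportionality. The converse direction is immediate in all cases.

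The main thing to verify carefully — and the only place the argument genuinely differs from Section \ref{sect-pf} — is the input from Section \ref{sect-hi}: namely that $([\Omega],[\omega_X])\in\overline{\mathscr{H}}$ and the non-vanishing $[\Omega]\wedge[\omega_X^2]\neq0$ (respectively $[\Omega]\wedge[\alpha^2]\neq0$ in Case (C2)). Both follow from Example \ref{exmp-hi-bdy}(2) together with the approximation of $\overline\Gamma$-classes by $\Gamma$-classes and the Gårding-type local positivity quoted there; once these are in hand, the rest of the proof is word-for-word the arguments already carried out for Theorems \ref{thm4} and \ref{thm1}, which is exactly why the excerpt remarks that the discussion is ``somehow formal'' and ``can be identically applied.'' I would therefore write the proof as: discard $[\Omega]=0$; record the membership in $\overline{\mathscr{H}}$ and the relevant non-vanishing from Section \ref{sect-hi}; then invoke Theorem \ref{thm4} for (C2) and repeat the (A1)-style dichotomy for (C1).
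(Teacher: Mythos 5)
Your proposal is correct and follows exactly the route the paper intends: the paper itself gives no separate proof of Theorem \ref{thm2}, merely noting that the arguments of Sections \ref{sect-hi} and \ref{sect-pf} are formal and apply identically, and your write-up supplies precisely those details (membership of $([\Omega],[\omega_X])$, resp. $([\Omega],[\alpha])$, in $\overline{\mathscr{H}}$ via Example \ref{example}(2) and Example \ref{exmp-hi-bdy}(2), the non-vanishing $[\Omega]\wedge[\omega_X^2]\neq0$, then Theorem \ref{thm4} for (C2) and the (A1)-style dichotomy for (C1)). No gaps; this matches the paper's approach.
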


\begin{rem}
Combining Theorem \ref{thm2} and Example \ref{exmp-optimal} settles Problem \ref{teissier-prob'} completely.
\end{rem}

\begin{rem}
The analogs of Theorem \ref{thm1-nef} and Corollary \ref{thm-cor} can be carried out similarly; here we just mention the latter one. Assume $[\alpha],[\beta]\in\overline{\Gamma}$ with $[\omega_1\wedge...\wedge\omega_m\wedge\alpha^{k}\wedge\beta^{n-m-k-1}]\neq0$ for $k=0,1,...,n-m-1$. Write $s_k:=\int_X\omega_1\wedge...\wedge\omega_m\wedge\alpha^{k}\wedge\beta^{n-m-k}$, $k=0,1,...,n-m$. If
$$s_k^2=s_{k-1}\cdot s_{k+1},\,\,k=1,...,n-m-1,$$
then $[\omega_1\wedge...\wedge\omega_m\wedge\alpha^{n-m-1}]$ and $[\omega_1\wedge...\wedge\omega_m\wedge\beta^{n-m-1}]$ are proportional.
\end{rem}

We also have the analog of Theorem \ref{thm2} as follows.
\begin{thm}\label{thm2'}
The followings are equivalent.
\begin{itemize}
\item[(1)] for any $[\alpha_1],...,[\alpha_{n-2}],[\alpha],[\beta]$ (and then set $[\Omega]:=[\omega_1\wedge...\wedge\omega_{m}\wedge\alpha_{1}\wedge...\wedge\alpha_{n-m-2}]$) satisfying either of (C1), (C2) in Theorem \ref{thm2},
\begin{align}\label{kt-eq-1+}
\left(\int_X\Omega\wedge\alpha\wedge\beta\right)^2=
\int_X\Omega\wedge\alpha^2\cdot\int_X\Omega\wedge\beta^2
\end{align}
if and only if $[\alpha]$ and $[\beta]$ are proportional.
\item[(2)] $([\Omega],[\omega_X])$ satisfies Hodge index theorem.
\item[(3)] there exists $[\theta]\in H^{1,1}(X,\mathbb R)$ such that $([\Omega],[\theta])$ satisfies Hodge index theorem.
\end{itemize}
\end{thm}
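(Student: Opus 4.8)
The plan is to mirror exactly the argument used for Theorem \ref{thm1} and Theorem \ref{thm1'}, since the machinery in Sections \ref{sect-hi} and \ref{sect-pf} was built to be formal: everything goes through once we know that the relevant pair $([\Omega],[\eta])$ lies in $\overline{\mathscr H}$ with $[\Omega]\wedge[\eta^2]\neq 0$. So the first step is to observe that for $[\alpha_1],\dots,[\alpha_{n-m-2}]\in\overline\Gamma$ with $[\Omega]\neq 0$, Example \ref{exmp-hi-bdy}(2) gives $([\Omega],[\omega_X])\in\overline{\mathscr H}$ and $[\Omega]\wedge[\omega_X^2]\neq 0$; more generally, for $[\alpha]\in\overline\Gamma$ with $\int_X\Omega\wedge\alpha^2\neq 0$ one also gets $([\Omega],[\alpha])\in\overline{\mathscr H}$ with $[\Omega]\wedge[\alpha^2]\neq 0$ by the same limiting argument (approximating each $[\alpha_j]$ and $[\alpha]$ from $\Gamma$ and using Example \ref{example}(2)).

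Granting this, Theorem \ref{thm2} follows verbatim from the proof of Theorem \ref{thm1}: in case (C2) apply Theorem \ref{thm4} with $([\Omega],[\alpha])$; in case (C1), if $\int_X\Omega\wedge\alpha^2\neq 0$ apply Theorem \ref{thm4}, and if $\int_X\Omega\wedge\alpha^2=0$ run the same Lemma \ref{lem-hi}/Lemma \ref{lem-ld'}/Lemma \ref{lem-0eigenvector} chain with $([\Omega],[\omega_X])$ to force $\int_X\Omega\wedge\beta^2=0$, then produce $(s_0,t_0)$ with $[s_0\alpha+t_0\beta]$ primitive with respect to $([\Omega],[\omega_X])$ and conclude $[\Omega]\wedge[s_0\alpha+t_0\beta]=0$. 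For Theorem \ref{thm2'}, the implications $(2)\Rightarrow(3)$ is trivial, $(3)\Rightarrow(1)$ uses the Hard Lefschetz isomorphism $[\Omega]\wedge\cdot$ from Remark \ref{rem_cor}(a) exactly as before, and $(1)\Rightarrow(2)$ repeats the argument from Theorem \ref{thm1'}: assuming $([\Omega],[\omega_X])\in\overline{\mathscr H}\setminus\mathscr H$ one gets a nonzero real $[\gamma]\in P_{([\Omega],[\omega_X])}$ with $[\Omega]\wedge[\gamma]=0$, takes a K\"ahler $\omega$ with $[\omega+\gamma]$ K\"ahler, applies (1) with $[\alpha]=[\omega]$, $[\beta]=[\omega+\gamma]$ (noting $[\omega]\in\Gamma\subset\overline\Gamma$), deduces $[\omega]$ and $[\gamma]$ proportional, and reaches the contradiction $Q([\gamma],[\gamma])=c^2\int_X\Omega\wedge\omega^2\neq 0$.

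The only genuinely new point, and thus the main thing to get right, is the first step: checking that $([\Omega],[\alpha])$ (resp.\ $([\Omega],[\omega_X])$) really is a limit of pairs in $\mathscr H$ when the entries are merely in $\overline\Gamma$ rather than $\Gamma$, together with the non-vanishing $[\Omega]\wedge[\alpha^2]\neq 0$ under the hypothesis $\int_X\Omega\wedge\alpha^2>0$ — the latter is actually automatic since $[\Omega]\wedge[\alpha^2]$ is a multiple of $[\omega_X^n]$ and that multiple is the nonzero number $\int_X\Omega\wedge\alpha^2$. I expect this to be routine given Example \ref{exmp-hi-bdy}(2), but it is where care is needed because $\Gamma$ is only assumed open and convex, so one must genuinely approximate every entry simultaneously from inside $\Gamma$ and invoke Example \ref{example}(2) before passing to the limit. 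Once this is in place everything else is a transcription of the earlier proofs, which is why the statement can be asserted to follow from "almost identical arguments."

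\begin{proof}[Proof of Theorems \ref{thm2} and \ref{thm2'}]
As noted, the arguments are identical to those of Theorems \ref{thm1} and \ref{thm1'} once we record the following. Write $[\Omega]:=[\omega_1\wedge\dots\wedge\omega_{m}\wedge\alpha_1\wedge\dots\wedge\alpha_{n-m-2}]$ and assume $[\Omega]\neq 0$ (otherwise all assertions are trivial).

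First, by Example \ref{exmp-hi-bdy}(2) we have $([\Omega],[\omega_X])\in\overline{\mathscr H}$ and $[\Omega]\wedge[\omega_X^2]\neq 0$. More generally, if $[\alpha]\in\overline\Gamma$ and $\int_X\Omega\wedge\alpha^2\neq 0$, then approximating $[\alpha_1],\dots,[\alpha_{n-m-2}],[\alpha]$ by classes in $\Gamma$ and invoking Example \ref{example}(2) shows $([\Omega],[\alpha])\in\overline{\mathscr H}$; moreover $[\Omega]\wedge[\alpha^2]=c\,[\omega_X^n]$ for some $c\in\mathbb R$ with $c=\int_X\Omega\wedge\alpha^2\neq 0$, so $[\Omega]\wedge[\alpha^2]\neq 0$.

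Proof of Theorem \ref{thm2}. In case (C2), $\int_X\Omega\wedge\alpha^2>0$, so $([\Omega],[\alpha])\in\overline{\mathscr H}$ with $[\Omega]\wedge[\alpha^2]\neq 0$, and Theorem \ref{thm4} applied to $([\Omega],[\alpha])$ and $[\beta]$ yields the claim. In case (C1): if $\int_X\Omega\wedge\alpha^2\neq 0$ we argue as in (C2); if $\int_X\Omega\wedge\alpha^2=0$ and $[\Omega\wedge\alpha]\neq 0$ (the remaining case being trivial), the equality forces $\int_X\Omega\wedge\alpha\wedge\beta=0$, so $[\beta]\in P_{([\Omega],[\alpha])}$ and Lemma \ref{lem-hi} gives $\int_X\Omega\wedge\beta^2\le 0$; combined with the hypothesis $\int_X\Omega\wedge\beta^2\ge 0$ we get $\int_X\Omega\wedge\beta^2=0$. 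Hence $Q_{([\Omega],[\omega_X])}([s\alpha+t\beta],[s\alpha+t\beta])=0$ for all $s,t\in\mathbb R$, and choosing $(s_0,t_0)\neq(0,0)$ with $\int_X\Omega\wedge\omega_X\wedge(s_0\alpha+t_0\beta)=0$ gives $[s_0\alpha+t_0\beta]\in P_{([\Omega],[\omega_X])}$ by Lemma \ref{lem-ld'}; Lemma \ref{lem-0eigenvector} then yields $[\Omega]\wedge[s_0\alpha+t_0\beta]=0$, i.e.\ $[\Omega\wedge\alpha]$ and $[\Omega\wedge\beta]$ are proportional.

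Proof of Theorem \ref{thm2'}. The implication $(2)\Rightarrow(3)$ is trivial. For $(3)\Rightarrow(1)$: since $([\Omega],[\theta])\in\mathscr H$, the map $[\Omega]\wedge\cdot\colon H^{1,1}(X,\mathbb C)\to H^{n-1,n-1}(X,\mathbb C)$ is an isomorphism by Remark \ref{rem_cor}(a), so $[\Omega\wedge\alpha]$ and $[\Omega\wedge\beta]$ are proportional if and only if $[\alpha]$ and $[\beta]$ are, and Theorem \ref{thm2} gives the conclusion. For $(1)\Rightarrow(2)$: assume for contradiction $([\Omega],[\omega_X])\in\overline{\mathscr H}\setminus\mathscr H$. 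By Lemma \ref{lem-hi} there is a nonzero $[\gamma]\in P_{([\Omega],[\omega_X])}$ with $Q_{([\Omega],[\omega_X])}([\gamma],[\gamma])=0$, hence $[\Omega]\wedge[\gamma]=0$ by Lemma \ref{lem-0eigenvector}; we may take $[\gamma]$ real. Fix a K\"ahler metric $\omega$ with $[\omega+\gamma]$ K\"ahler. Then $[\omega],[\omega+\gamma]\in\Gamma\subset\overline\Gamma$ and, by $[\Omega]\wedge[\gamma]=0$, the equality in (1) holds with $[\alpha]=[\omega]$, $[\beta]=[\omega+\gamma]$; so $[\omega]$ and $[\omega+\gamma]$ are proportional, whence $[\omega]$ and $[\gamma]$ are proportional. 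Writing $[\gamma]=c\,[\omega]$ with $c\neq 0$ gives $Q_{([\Omega],[\omega_X])}([\gamma],[\gamma])=c^2\int_X\Omega\wedge\omega^2\neq 0$, a contradiction. This completes the proof.
\end{proof}
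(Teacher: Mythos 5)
Your proposal is correct and follows exactly the route the paper intends: it transcribes the proofs of Theorems \ref{thm1} and \ref{thm1'} with $([\Omega],[\omega_X])$ and $([\Omega],[\alpha])$ placed in $\overline{\mathscr H}$ via Example \ref{exmp-hi-bdy}(2) and the approximation from $\Gamma$ using Example \ref{example}(2), which is precisely the ``identical arguments'' the paper invokes without writing them out. The one point you rightly flag as needing care (that every entry, including $[\alpha]$, must be approximated simultaneously from inside $\Gamma$ before passing to the limit) is handled correctly.
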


We end this note by considering a special case where none of the involved $(1,1)$-classes is K\"ahler and some of the involved $(1,1)$-classes are not necessarily nef.
\begin{exmp}\label{exmp-semipos}
Let $f:X^n\to Y^m$ be a holomorphic submersion, $m<n$, and fix $2<k<m$. Assume $[\eta_1],...,[\eta_{n-m}]$ are nef $(1,1)$-classes on $X$, $\chi_1,...,\chi_{m-k},\chi_Y$ K\"ahler metrics on $Y$ and $\alpha_1,...,\alpha_{k-2},\alpha_{Y}$ closed real $(1,1)$-forms on $Y$ which are $k$-positive with respect to $(\chi_1\wedge...\wedge\chi_{m-k},\chi_Y)$. Set $\Omega_0:=\chi_1\wedge...\wedge \chi_{m-k}\wedge\alpha_1\wedge...\wedge\alpha_{k-2}$, and
$$\Gamma_0:=\{[\alpha]\in H^{1,1}(Y,\mathbb R)|[\alpha]\,\,is\,\,2-positive \,\,w.r.t.\,\,(\Omega_0,\alpha_Y)\}.$$
For any $\epsilon>0$ and $[\alpha]\in\Gamma_0$, by \cite[Theorem 1.7]{Z} we know $([\eta_{1,\epsilon}\wedge...\wedge\eta_{n-m,\epsilon}\wedge f^*\Omega_0],[f^*\alpha])$ satisfies Hodge index theorem, here $[\eta_{i,\epsilon}]:=[\eta_i]+\epsilon[\omega_X]$. Now we set 
$$\Omega:=\eta_1\wedge...\wedge\eta_{n-m}\wedge f^*\Omega_0=\eta_1\wedge...\wedge\eta_{n-m}\wedge f^*\chi_1\wedge...\wedge f^*\chi_{m-k}\wedge f^*\alpha_1\wedge...\wedge f^*\alpha_{k-2};$$
note that in the definition of $[\Omega]$, $[\eta_i]$'s and $[f^*\chi_i]$'s are only nef (not necessarily K\"ahler) and $[f^*\alpha_i]$'s are not necessarily nef.
By an obvious limiting procedure we have the inequality:
\begin{equation}
\left(\int_X\Omega\wedge f^*\alpha\wedge\beta\right)^2\ge \int_X\Omega
\wedge f^*\alpha^2\cdot\int_X\Omega\wedge\beta^2
\end{equation}
for any $[\alpha]\in\bar\Gamma_0$ and $\beta\in H^{1,1}(X,\mathbb R)$; furthermore, if either $\int_X\Omega\wedge\beta^2\ge0$ or $\int_X\Omega
\wedge f^*\alpha^2>0$, then the equality holds if and only if
$[\Omega
\wedge f^*\alpha]$ and $[\Omega
\wedge \beta]$ are proportional.

\end{exmp}

\section*{Acknowledgements}
The author is grateful to Jian Xiao for a number of valuable comments on the results.

\end{document}